\DeclarePairedDelimiterX{\inp}[2]{\langle}{\rangle}{#1, #2}
\newcommand{\wbb}{\bar{\mathbf w}}
\newcommand{\wbt}{\tilde{\mathbf w}}
\newcommand{\wbphi}{\mathbf w_{\phi}}
\newcommand{\bbt}{\tilde{\mathbf b}}
\newcommand{\xn}{\mathbf x_0}
\newcommand{\Xn}{\mathbf X_0}
\newcommand{\Xhb}{\hat{\mathbf X}}
\newcommand{\xhb}{\hat{\mathbf x}}
\newcommand{\Ab}{\mathbf A}
\newcommand{\Bb}{\mathbf B}
\newcommand{\Wb}{\mathbf W}
\newcommand{\Xb}{\mathbf X}
\newcommand{\Yb}{\mathbf Y}
\newcommand{\Mb}{\mathbf M}
\newcommand{\Gb}{\mathbf G}
\newcommand{\Hb}{\mathbf H}
\newcommand{\Vb}{\mathbf V}
\newcommand{\Sigb}{\mathbf \Sigma}
\newcommand{\yb}{\mathbf y}
\newcommand{\wb}{\mathbf w}
\newcommand{\zb}{\mathbf z}
\newcommand{\ab}{\mathbf a}
\newcommand{\bb}{\mathbf b}
\newcommand{\vb}{\mathbf v}
\newcommand{\xb}{\mathbf x}
\newcommand{\eb}{\mathbf e}
\newcommand{\gb}{\mathbf g}
\newcommand{\la}{\lambda}
\newcommand{\eps}{\epsilon}
\newcommand{\Exp}{\mathbb E}
\newcommand{\PSD}{\mathbb S_+}
\newcommand{\PD}{\mathbb S_{++}}
\newcommand{\RR}{\mathbb{R}}
\newcommand{\tr}{\text{Tr}}
\newcommand{\pr}{\text{Pr}}
\newcommand{\vecc}{\text{Vec}}
\renewcommand{\H}{\mathbb{H}}
\newcommand{\RS}[2]{\ifthenelse{\equal{#1}{}}{\mathsf{RS}}{\mathsf{RS}[#1,#2]}}
\renewcommand{\l}{\left}
\renewcommand{\r}{\right}
\renewcommand{\t}{\mathsf{T}}
\newtheorem{thm}{Theorem}
\newtheorem{defn}{Definition}
\newtheorem{cor}{Corollary}
\newtheorem{lem}{Lemma}
\newtheorem{asm}{Assumption}
\title{Universality in Learning from Linear Measurements}
\author{Ehsan Abbasi}
\author{Fariborz Salehi}
\author{Babak Hassibi}
\affil{Department of Electrical Engineering\\
	California Institute of Technology\\
	Pasadena, CA 91125}
\begin{document}

\maketitle

\begin{abstract}
	We study the problem of recovering a structured signal from independently and identically drawn linear measurements. A convex penalty function $f(\cdot)$ is considered which penalizes deviations from the desired structure, and signal recovery is performed by minimizing $f(\cdot)$ subject to the linear measurement constraints. The main question of interest is to determine the minimum number of measurements that is necessary and sufficient for the perfect recovery of the unknown signal with high probability. Our main result states that, under some mild conditions on $f(\cdot)$ and on the distribution from which the linear measurements are drawn, the minimum number of measurements required for perfect recovery depends only on the first and second order statistics of the measurement vectors. As a result, the required of number of measurements can be determining by studying measurement vectors that are Gaussian (and have the same mean vector and covariance matrix) for which a rich literature and comprehensive theory exists. As an application, we show that the minimum number of random quadratic measurements (also known as rank-one projections) required to recover a low rank positive semi-definite matrix is $3nr$, where $n$ is the dimension of the matrix and $r$ is its rank. As a consequence, we settle the long standing open question of determining the minimum number of measurements required for perfect signal recovery in phase retrieval using the celebrated PhaseLift algorithm, and show it to be $3n$.
\end{abstract}

\section{Introduction}
Recovering a structured signal from a set of linear observations appears in many applications in areas ranging from finance to biology, and from imaging to signal processing.  More formally, the goal is to recover an unknown vector $\xn\in\RR^n$, from observations of the form $y_i=\ab_i^\t\xn$, for $i=1,\dots,m$. In many modern applications, the ambient dimension of the signal, $n$, is often (overwhelmingly) larger than the number of observations, $m$. In such cases, there are infinitely many solutions that satisfy the linear equations arising from the observations, and therefore to obtain a unique solution one must assume some prior structure on the unknown vector. Common examples of structured signals are sparse and group-sparse vectors \cite{donoho2006compressed,candes2006stable}, low-rank matrices \cite{recht2010guaranteed,candes2009exact}, and simultaneously-structured matrices \cite{chandrasekaran2010latent,oymak2015simultaneously}. To this end, we use a convex penalty function $f:\RR^n\rightarrow \RR$, that captures the \textit{structure} of the structured signal, in the sense that signals that do not adhere to the desired structure will have a higher cost. Therefore, the following estimator is used to recover $\xn$,
\begin{align}\label{eq:init_est}
	\xhb=\arg\min\limits_{\xb}~f(\xb)\quad\text{subject to, }\quad y_i=\ab_i^\t\xb,~~i=1,\dots,m~.
\end{align}
Popular choices of $f(\cdot)$ include the $\ell_1$-norm for sparse vectors \cite{tibshirani1996regression}, and the nuclear norm for low-rank matrices \cite{recht2010guaranteed}. A canonical question in this area is ``how many measurements are needed to recover $\xn$ via this estimator?" This question has been extensively studied in the literature~(see \cite{stojnic2013upper,amelunxen2014living,chandrasekaran2012convex} and the references therein.) The answer depends on the $\ab_i$ and is very  difficult to determine for any given set of measurement vectors. As a result, it is common to assume that the measurement vectors are drawn randomly from a given  distribution and to ask whether the unknown vector can be recovered with high probability. In the special case where the entries of the measurement matrix are drawn iid from a Gaussian distribution, the minimum number of measurements for the recovery of $\xn$ with high probability is known (and is related to the concept of the Gaussian width \cite{stojnic2013upper,amelunxen2014living,chandrasekaran2012convex}). For instance, it has been shown that $2k\log(n/k)$ linear measurements is required to recover a $k-$sparse signal \cite{donoho2009observed}, and $3rn$ measurements suffice for the recovery of a symmetric $n\times n$ rank-$r$ matrix \cite{oymak2010new,chandrasekaran2012convex}. Recently, Oymak et al \cite{oymak2017universality} showed that these thresholds remain unchanged, as long as the entries of each $\ab_i$ are \textit{i.i.d} and drawn from a "well-behaved" distribution. It has also been shown that similar universality holds in the case of noisy measurements \cite{panahi2017universal}. Although these works are of great interest, the independence assumption on the entries of the measurement vectors can be restrictive. In certain applications in communications, phase retrieval, covariance estimation, the entries of the measurement vectors $\ab_i$ have correlations. In this paper, we show a much stronger universality result which holds for a broader class of measurement distributions. Here is an informal description of our result:
\begin{quote}
	\emph{Assume the measurement vectors $\ab_i$ are drawn iid from some given distribution. In other words, the measurement vectors are iid random, but their entries are not necessarily so. Then the minimum number of observations needed to recover $\xn$ from \eqref{eq:init_est} with high probability, depends only on the first two statistics of the $\ab_i$, i.e., their mean vector $\mu$, and covariance matrix $\Sigma$.}
\end{quote}
We anticipate that this universality result will have many practical ramifications. In this paper we focus on the ramifications to the problem of recovering a structured matrix, $\Xn\in\RR^{n\times n}$, from quadratic measurements (a.k.a. rank-one projections). In this problem, we are given observations of the form $y_i=\ab_i^\t\Xn\ab_i=\tr(\Xn(\ab_i\ab_i^\t)) = \mbox{vec}(X)^t\mbox{vec}(\ab_i\ab_i^t)$ for $i=1,\dots,m$.\footnote{The reader should pardon the abuse of notation as the measurement vectors are now $\mbox{vec}(\ab_i\ab_i^t)$.} Such measurement schemes appear in a variety of problems \cite{chen2015exact,cai2015rop,white2015local,li2016low,li2013sparse}. An interesting application of learning from quadratic measurements is the PhaseLift algorithm~\cite{candes2013phaselift} for phase retrieval. In phase retrieval, the goal is to recover the signal $\xn$ from quadratic measurements of the form, $y_i=|\ab_i^\t\xn|^2=\ab_i^\t(\xn\xn^\t)\ab_i$. Note that $\xn\xn^t$ is a low-rank (in this case rank-$1$) matrix and PhaseLift relaxes this constraint to a non-negativity constraint and minimizes nuclear norm to encourage a low rank solution. Quadratic measurements also appears in  non-coherent energy measurements in communications and signal processing \cite{tropp2009beyond,ariananda2012compressive}, sparse covariance estimation~\cite{chen2015exact,white2015local}, and sparse phase retrieval \cite{li2013sparse,shechtman2014gespar}. Recently, Chen et al~\cite{chen2015exact} proved sufficient bounds on the number of measurements for various structures on the matrix $\Xn$. However, to the best of our knowledge, prior to this work, the precise number of required measurements for perfect recovery was unknown.\\ 
For example, when the $\ab_i$ have iid Gaussian entries (note that the measurement vectors, which are now $\mbox{vec}(\ab_i\ab_i^t)$, are no longer iid Gaussian) we show that $3nr$ measurement is necessary and sufficient for the perfect recovery of a rank-$r$ matrix from quadratic measurements. In the special case of phase retrieval, we therefore demonstrate that $3n$ measurements is necessary and sufficient for perfect recovery of $\xn$, which settles the long standing open question of the recovery threshold for PhaseLift. In particular, this indicates that $2n$ extra phaseless measurements is all that is needed to compensate the missing phase information.\\
The remainder of the paper is structured as follows. The problem setup and definitions are given in Section~\ref{sec:prem}. In Section~\ref{sec:main}, we introduce our universality framework, which states that the number of required observations for the recovery of an unknown model depends only on the first two statistics of the measurement vectors. As an applications, in Section~\ref{sec:quad}, we apply this universality theorem to derive tight bounds (i.e., necessary and sufficient conditions) on the required number of observations for matrix recovery via quadratic measurements.

\section{Preliminaries}\label{sec:prem}
\subsection{Notations}
We start by introducing some notations that are used throughout the paper. Bold lower letters $\xb,\yb,\dots$ are used to denote vectors, and bold upper letters $\Xb,\Yb,\dots$  are for matrices. For a matrix $\Xb\in\RR^{m\times n}$, $\text{Vec}(\Xb)\in\RR^{mn}$ returns the vectorized form of the matrix. $\|\Xb\|_2$, $\|\Xb\|_F$, $\|\Xb\|_\star$ and $\tr(\Xb)$ represent the operator norm, the Frobenius norm, the nuclear norm and the trace of the matrix $\Xb$, respectively. $\|\xb\|_{\ell_p}$ denotes the $\ell_p$-norm of the vector $\xb$ and for matrices, $\|\Xb\|_{\ell_p}=\|\text{Vec}(\Xb)\|_{\ell_p}$. For both vectors and matrices, $\|\cdot\|_0$ indicates the number of non-zero entries. The set of $n\times n$ positive definite matrices and positive semi-definite matrices are denoted by $\PD^n$ and $\PSD^n$, respectively. The letters $\gb$ and $\Gb$ are reserved for a Gaussian random vector and matrix with i.i.d. standard normal entries. The letter $\Hb$ is reserved for a random Gaussian \textit{Wigner} matrix, that is a \emph{symmetric} matrix whose upper-diagonal entries drawn independently from $\mathcal N(0,1)$ whose its diagonals entries are drawn independently from $\mathcal N(0,2)$. Finally, the letter $\mathbf I$ is reserved for the identity matrix. For a random vector $\ab$, $\Exp[\ab]$ and $\text{Cov}[\ab]$ represent the expected value and the covariance matrix of $\ab$. 
\subsection{Problem Setup}
We consider the problem of recovering the unknown vector $\xn\in\mathcal S\subseteq\RR^n$ from $m$ observations of the form $y_i=\ab_i^\t\xn$, $i=1,\dots,m$. Here, the \textit{known} measurement vectors $\ab_i\in\RR^n$'s are drawn independently and identically from a random distribution. These observations can be reformulated as 
\begin{align}\label{eq:noiseless_model}
	\yb=\Ab\xn~,
\end{align}
where $\yb=[y_1,\dots,y_m]^\t\in \RR^m$ and $\Ab=[\ab_1,\dots,\ab_m]^\t\in\RR^{m\times n}$. We focus on the high-dimensional setting where both $n$ and $m$ grow large. We use the notation $m=\theta(n)$, to fix the rate at which $m$ grows compared to $n$. Of special interest is the underdetermined case where the number of measurement is smaller than the ambient dimension. In this case, the problem of signal reconstruction is generally ill-posed unless some prior information is available regarding the structure of $\xn$. Some popular cases of structures include, \textit{sparse} vectors, \textit{low-rank} matrices, and simultaneously-structured matrices.\\
\textbf{Convex estimator:} To recover the structured vector $\xn$, we minimize a convex function $f:\RR^n\rightarrow \RR$ that enforces this structure. We do this minimization for all feasible points $\xb\in\mathcal S$, that satisfy $\yb=\Ab\xb$. We formally define such estimators as follows,
\begin{defn}\label{def:est}
	Let  $\xn\in \mathcal S$ where $\mathcal S\subseteq\RR^n$ is a convex set. For a convex function $f:\RR^n\rightarrow \RR$ and a measurement matrix $\Ab\in\RR^{m\times n}$, we define the convex estimator $\mathcal E\{ \xn,\Ab,\mathcal S,f(\cdot)\}$ as following, 
	\begin{align}\label{eq:conv_est}
		\xhb=\arg&\min\limits_{\substack{\xb\in\mathcal S\\ \Ab\xb=\Ab\xn}}~f(\xb)~.
	\end{align}
	We say $\mathcal E\{ \xn,\Ab,\mathcal S,f(\cdot)\}$ has perfect recovery iff $\xhb=\xn$.
\end{defn}
Note that we are given the observation vector $\yb=\Ab\xn$ in the constraint of \eqref{eq:conv_est}. We aim to characterize the perfect recovery criteria for this estimator. Given a structured vector $\xn$, the perfect recovery of an estimator $\mathcal E\{ \xn,\Ab,\mathcal S,f(\cdot)\}$ depends on three factors; the number of observations $m$ compared to the dimension of the ambient space $n$, properties of the measurement vectors $\{\ab_i\}_{i=1}^m$, and the penalty function, $f(\cdot)$. We briefly explain each factor, below.\\
\textbf{The rate function $\theta(\cdot)$:} We work in the high dimensional regime where both $n$ and $m$ grow to infinity with a fixed rate $m=\theta(n)$. Finding the minimum number of measurements to recover $\xn$ via \eqref{eq:conv_est}, translates to finding the \emph{smallest rate function $\theta^\star(\cdot)$}, for which our estimator has perfect recovery. This optimal rate function depends on the problem settings and varies in different problems. For instance, in order to recover a rank-$r$ matrix in $\PSD^n$, we will need the measurements to be of order $m=\mathcal O(n)$, while in the case of $k$-sparse matrices, the measurements will be of order $m=\mathcal O(k\log(n^2/k))$, where in many applications $k$ is a fraction of $n^2$.\\
\textbf{The penalty function:} We use a convex function $f(\cdot)$ that promotes the particular structure of $\xn$. Exploiting a convex penalty for the recovery of structured signals has been studied extensively~\cite{chandrasekaran2012convex,amelunxen2014living,stojnic2013upper, donoho2009message, candes2008restricted, thrampoulidis2018precise}. Chandrasekaran et. al. \cite{chandrasekaran2012convex} introduced the concept of the atomic norm, which is a convex surrogate defined based on a set of (so-called) "atoms". For instance, the corresponding atomic norm for sparse recovery is the $\ell_1$-norm and for low-rank matrix recovery the nuclear norm. 
Another interesting scenario is when the underlying parameter $\xn$ simultaneously exhibits multiple structures such as being low-rank and sparse. For simultaneously structured signals building the set of atoms is often intractable. Therefore, it has been proposed~\cite{oymak2015simultaneously,chen2014estimation} to use a weighted sum of corresponding atomic norms for each structure as the penalty.\\
\textbf{The measurement vectors:} We consider a random ensemble, where the vectors $\{\ab_i\}_{i=1}^m$ are drawn \textit{independently and identically} from a random distribution. Later in Section~\ref{sec:asm}, we formally present the required assumptions on this distribution. It has been observed that the estimator~\eqref{eq:conv_est} exhibits a \textit{phase transition} phenomenon, i.e., there exist a phase transition rate $\theta^{\star}(n)$, such that when $m>\theta^{\star}(n)$ the optimization program~\eqref{eq:conv_est}  successfully recover $\xn$ with high probability, otherwise, when $m<\theta^\star(n)$ it fails  with high probability~\cite{amelunxen2014living,chandrasekaran2012convex}. The question is that \emph{how is this phase transition is related to the properties of the measurement vectors $\ab_i$'s?}\\
\textbf{Universality in learning:} Directly calculating the precise phase transition behavior of the estimator $\mathcal E (\xn,\Ab,\mathcal S,f(\cdot))$, for a general random distribution on the measurement vectors is very challenging. Recently, as an extension of Gaussian comparison lemmas due to Gordon~\cite{gordon1985some,gordon1988milman} and earlier work in \cite{stojnic2009various,stojnic2013upper,chandrasekaran2012convex,amelunxen2014living}, a new framework, known as CGMT~\cite{thrampoulidis2018precise,thrampoulidis2015regularized}, has been developed 
which made this analysis possible  when the measurement vectors $\{\ab_i\}_{i=1}^m$, are independently drawn from the Gaussian distribution, $\mathcal N(0,\mathbf I_n)$. Another parallel work that makes this analysis possible under the same conditions is known as AMP \cite{donoho2009message}. However, the Gaussian assumption is critical in the analysis through these frameworks, which restricts us from investigating a vast variety of practical problems.\\
As our main result, we show that, for a broad class of distributions, the phase transition of $\mathcal E (\xn,\Ab,\mathcal S,f(\cdot))$ depends only on the first two statistics of the distribution on  the measurement vectors $\{\ab_i\}_{i=1}^m$. As a result, the phase transition of the estimator remains unchanged when we replace the measurement vectors  with the ones drawn from a Gaussian distribution with the same mean vector and covariance matrix.  As the phase transition is the same as the one with Gaussian measurements, we can use the CGMT framework to analyze the latter and get the desired result.  \\
\textbf{Equivalent Gaussian Problem:} Let $\mu:=\Exp[\ab_i]$ and $\Sigb:=\text{Cov}[\ab_i]$ for $i=1,2,\ldots,m$, and consider the following problem:
\begin{enumerate}
	\item We are given $m$ observations of the form $\tilde y_i=\gb_i^\t\xn$ and the measurement vectors $\{\gb_i\}_{i=1}^m$.
	\item The rows of the measurement matrix $\Gb=[\gb_1,\dots,\gb_m]^\t\in\RR^{m\times n}$ are independently drawn from the multivariate Gaussian distribution $\mathcal N(\mu,\Sigb)$.
	\item We use the estimator $\mathcal E (\xn,\Gb,\mathcal S,f(\cdot))$, as in Definition \ref{def:est}, to recover $\xn$.
\end{enumerate}
In Theorem \ref{thm:1}, we show that under certain conditions, the two estimators $\mathcal E (\xn,\Ab,\mathcal S,f(\cdot))$ and $\mathcal E (\xn,\Gb,\mathcal S,f(\cdot))$ asymptotically exhibit the same phase transition behavior. 
Before stating our main result in Section~\ref{sec:main}, we discuss the assumptions needed for our universality to hold.
\subsection{Assumptions}\label{sec:asm}
We show universality for a wide range of distributions on the measurement vector as well as a broad class of convex penalties. Here, we give the conditions needed for the measurement matrix,
\begin{asm}\textbf{[The Measurement Vectors]}\label{asm:1}
	We say the measurement matrix $\Ab=[\ab_1,\dots,\ab_m]^\t\in\RR^{m\times n}$ satisfies Assumption 1 with parameters $\mu\in\RR^n$ and $\Sigb\in\RR^{n\times n}$, if the followings hold true.
	\begin{enumerate}
		\item\textit{[Sub-Exponential Tails]} The vectors $\ab_i$'s are independently drawn from a random sub-exponential distribution, with mean $\mu$ and covariance $\Sigb\succ0$. 
		\item\textit{[Bounded Mean]}  For some constants $c_1,\tau_1>0$, we have  $\frac{\|\mu\|_2^2}{\Exp[\|\ab_i-\mu\|^2]}\leq c_1\cdot n^{-\tau_1}$, for all $i$.
		\item\textit{[Bounded Power]} For some constants $c_2,\tau_2>0$, we have $\frac{\text{Var}(\|\ab_i\|^2)}{\Exp^2[\|\ab_i-\mu\|^2]}\leq c_2\cdot n^{-\tau_2}$ for all $i$ .
	\end{enumerate}
\end{asm}
Assumption \ref{asm:1} summarizes the technical conditions that are essential in the proof of our main theorem. The first assumption on the tail of the distribution enables us to exploit  concentration inequalities for sub-exponential distributions.  We allow the vector $\ab_i$ to have a non-zero mean in Assumption 1.2. Yet we require the power of its mean to be small compared to the power of the random part of the vector. Intuitively, one would like the measurement vectors to sample diversely from all the directions in $\RR^n$, and not be biased towards a specific direction. Finally, Assumption 1.3 is meant to control the dependencies among the entries of $\ab_i$ and is used to prove concentration of $\frac{1}{n}\ab_i^\t\Mb\ab_i$ around its mean, for a matrix $\Mb$ with bounded operator norm. For instance, for a Gaussian vector $\gb\sim\mathcal N(\mathbf 0,\mathbf I)$, we have $\text{Var}[\|\gb\|^2]=2n$ and $\Exp^2[\|\gb\|^2]=n^2$. So Assumption 1.3 is satisfied with $c_2=2$ and $\tau_2=1$. We will examine these assumptions for the applications discussed in Section \ref{sec:quad}.\\
In addition, we need to enforce a few conditions on the penalty function $f(\cdot)$ as follows,
\begin{asm}\textbf{[The Penalty Function]}\label{asm:2}
	We say the funtion $f(\cdot)$ satisfies Assumption 2, if the following holds true.
	\begin{enumerate}
		\item\textit{[Separablity]} $f(\cdot)$ is continuous, convex and separable, where $f(\xb)=\sum_{i=1}^n f_i(x_i)$ .
		\item\textit{[Smoothness]}  The functions $\{f_i(\cdot)\}$ are three times differentiable everywhere, except for a finite number of points.
		\item\textit{[Bounded Third Derivative]} For any $C>0$, there exists a constant $c_f>0$, such that for all $i$, we have $|\frac{\partial^3 f_i(x)}{\partial x^3}|\leq c_f$, for all smooth points in the domain of $f_i(\cdot)$ such that $|x|<C$.
	\end{enumerate}
\end{asm}
As observed in the Assumption 2.1, we only consider the special (yet popular) case of separable penalty functions. Common choices include $\|\xb\|_{\ell_1}$ and $\|\xb\|_{\ell_2}^2$ for vectors, and $\|\Xb\|_{\ell_1}$, $\|\Xb\|_F$ and $\tr(\Xb)$ (which is equivalent to the nuclear norm of $\Xb$ when $\Xb\in\PSD$) for matrices. We can also apply our theorem for $\ell_p$-norm. This is due to the fact that replacing $\|\cdot \|_{\ell_p}$ with $\| \cdot \|_{\ell_p}^p$ does not change our estimate, and the latter is a separable function.

\section{Main Result}
\label{sec:main}
In this section, we state our main theorem which shows that the performance of the convex estimator $\mathcal E (\xn,\Ab,\mathcal S,f(\cdot))$, is independent of the distribution of the measurement vectors. So we can replace them with the Gaussian random vectors with the same mean and covariance. Next, using CGMT framework \cite{thrampoulidis2018precise,thrampoulidis2015regularized}, we analyze the phase transition in the case with Gaussian measurements, in Corollary \ref{cor:general}. Later, we will apply this result to some well-known problems in Section \ref{sec:quad}. 
\subsection{Universality Theorem}
\begin{thm}\label{thm:1}\textbf{[non-Gaussian=Gaussian]}
	Consider the problem of recovering $\xn\in\mathcal S\subseteq\RR^n$ from the measurements $\yb=\Ab\xn\in\RR^m$, using a convex penalty function $f(\cdot)$ in the estimator $\mathcal E\{ \xn,\Ab,\mathcal S,f(\cdot)\}$ in \eqref{eq:conv_est}. Assume $\mathcal S$ is a convex set and $m$ and $n$ are growing to infinity at a fixed rate $m=\theta(n)$. Also assume that 
	\begin{enumerate}
		\item $f:\RR^n\rightarrow \RR$ is a convex function that satisfies Assumption \ref{asm:2}.
		\item The measurement matrix $\Ab=[\ab_1,\dots,\ab_m]^\t$ satisfies Assumption \ref{asm:1}, with $\mu:=\Exp[\ab_i]$ and $\Sigb:=\text{Cov}[\ab_i]$ for all $i=1,\dots,m$ .
		\item $\Gb=[\gb_1,\dots,\gb_m]^\t\in\RR^{m\times n}$ is a random Gaussian matrix with independent rows drawn from Gaussian distribution $\mathcal N(\mu,\Sigb)$ .
	\end{enumerate}
	Then the estimator $\mathcal E\{ \xn,\Ab,\mathcal S,f(\cdot)\}$ (introduced in Definition \ref{def:est}) succeeds in recovering $\xn$ with probability approaching one (as $m$ and $n$ grow large), if and only if the estimator $\mathcal E\{ \xn,\Gb,\mathcal S,f(\cdot)\}$ succeeds with probability approaching one.
\end{thm}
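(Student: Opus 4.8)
The plan is to reduce the perfect-recovery condition for the estimator $\mathcal E\{\xn,\Ab,\mathcal S,f(\cdot)\}$ to a deterministic geometric criterion, and then show that the probability of that criterion holding is asymptotically the same for $\Ab$ and for the Gaussian matrix $\Gb$ via a Lindeberg-type swapping argument combined with a convex-analytic smoothing. First I would recall the standard characterization: $\xhb = \xn$ fails if and only if there is a nonzero descent direction $\vb$ of $f$ at $\xn$ (compatible with the tangent cone of $\mathcal S$ at $\xn$) lying in $\ker(\Ab)$; equivalently, $\Ab$ restricted to this descent cone $\mathcal D$ has a nontrivial null vector. Since $f$ is separable with the smoothness of Assumption~\ref{asm:2}, I can replace the hard cone-membership constraint by a penalized formulation: recovery corresponds to the sign of the optimal value of a min–max problem of the form $\min_{\vb\in\mathcal D}\ \|\Ab\vb\|$ (or a Moreau-envelope-smoothed variant thereof), and the question becomes whether this optimal value is bounded away from zero with high probability.

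The heart of the argument is a universality (invariance) principle for this optimal value. I would fix a smooth bounded test function $\varphi$ and consider $\Exp\,\varphi\!\big(F(\ab_1,\dots,\ab_m)\big)$, where $F$ is (a smoothed surrogate of) the min–max value above, and interpolate between the rows $\ab_i$ and the Gaussian rows $\gb_i$ one at a time, as in the Lindeberg method. At each swap the difference is controlled by a third-order Taylor expansion in the replaced row: the first two moments of $\ab_i$ and $\gb_i$ agree by construction ($\mu$ and $\Sigb$), so only the third-order term survives. Bounding that term is exactly where Assumptions~\ref{asm:1} and~\ref{asm:2} enter — the sub-exponential tails (Assumption~1.1) give the moment bounds needed to control the remainder, the bounded-mean condition (Assumption~1.2) ensures the mean does not dominate and bias a direction, and the bounded-power condition (Assumption~1.3) delivers the concentration of $\frac1n\ab_i^\t\Mb\ab_i$ around its expectation for the operator-norm-bounded matrices $\Mb$ that arise as Hessians of the smoothed objective. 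The bounded third derivative of $f$ (Assumption~2.3), restricted to the bounded region guaranteed by a priori boundedness of the relevant optimizers, is what makes the third-order Taylor term of the surrogate finite. Summing $m = \theta(n)$ such swaps, each of size $o(1/m)$, yields $|\Exp\varphi(F(\ab))-\Exp\varphi(F(\gb))|\to 0$.

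From the invariance of $\Exp\varphi(F(\cdot))$ for all smooth bounded $\varphi$, I would conclude convergence in distribution of the (smoothed) optimal values, hence that the event ``optimal value $>\eps$'' has asymptotically equal probability under $\Ab$ and under $\Gb$; letting the smoothing parameter and $\eps$ tend to zero carefully (using that the Gaussian problem itself has a sharp phase transition, so the optimal value does not concentrate exactly at zero except at the threshold) transfers the $\{$success w.h.p.$\}$ / $\{$failure w.h.p.$\}$ dichotomy from $\Gb$ to $\Ab$ and back. The main obstacle I anticipate is making the smoothing rigorous: the raw recovery indicator is a non-smooth, non-Lipschitz functional of $\Ab$ (a min–max over an unbounded cone), so replacing it by a functional $F$ that is simultaneously (i) a faithful proxy for exact recovery, (ii) smooth enough in the entries of each row to admit a third-order expansion, and (iii) a priori restricted to a compact set so that the $c_f$ bound of Assumption~2.3 applies — and doing all of this uniformly as $n\to\infty$ — is the delicate technical core. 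The secondary difficulty is handling the correlation structure within each row (the covariance $\Sigb$): unlike the entrywise-iid universality results of \cite{oymak2017universality}, here the swap must be performed at the level of whole vectors, so the third-order remainder involves $\Exp[(\ab_i-\mu)^{\otimes 3}]$ contracted against a third derivative tensor, and Assumption~1.3 must be leveraged to show this is negligible relative to $n^{-1}$ per step.
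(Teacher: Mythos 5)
Your proposal is correct and follows essentially the same route as the paper: the paper also smooths the recovery problem into a strongly convex penalized least-squares objective (via a quadratic Lagrangian term plus a ridge regularizer), performs a row-by-row Lindeberg swap whose per-step error is $O(m^{-(1+\tau/2)})$ using a leave-one-out quadratic surrogate of the objective (with Assumptions 1.1--1.3 and 2.3 controlling the remainder exactly as you describe), transfers expectations to probabilities through a Lipschitz test function, and exchanges the limit with the sup over multipliers via a pointwise-convergence lemma for convex functions. The technical steps you flag as delicate (faithful smoothing, a priori boundedness of the optimizers, vector-level rather than entrywise swapping) are precisely where the paper expends its effort.
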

Theorem \ref{thm:1} shows that only the mean and covariance of the measurement vectors $\ab_i$ affect the required number of measurements for perfect recovery in \eqref{eq:conv_est}. Although Theorem \ref{thm:1} holds for $n$ and $m$ growing to infinity, the result of our numerical simulations in Section~\ref{sec:sim}, indicates the validity of universality for values of $m$ and $n$ ranging in the order of hundreds.
\subsubsection{Analysis of the Gaussian Estimator}
Theorem \ref{thm:1} shows the equivalence of the convex estimator $\mathcal E\{ \xn,\Ab,\mathcal S,f(\cdot)\}$ and the Gaussian estimator $\mathcal E\{ \xn,\Gb,\mathcal S,f(\cdot)\}$. We can utilize the CGMT framework to analyze the perfect recovery conditions for $\mathcal E\{ \xn,\Gb,\mathcal S,f(\cdot)\}$. Before doing so, we need the definition of the \textit{descent cone},
\begin{defn}\textit{[Descent Cone]}
	The descent cone of a convex function $f(\cdot)$ at point $\xn$ is defined as
	\begin{align}
		\mathcal D_{f}(\xn)=\text{Cone}\l(\{ \yb ~: ~ f(\yb)\leq f(\xn) \}\r)~,
	\end{align}
	which is a convex cone. Here, $\text{Cone}(\mathcal S)$ denotes the conic-hull of the set $\mathcal S$. 
\end{defn}
\begin{cor}\label{cor:general}
	Consider the problem of recovering the vector $\xn\in\mathcal S$, given the observations $\yb=\Gb\xn\in\RR^m$, via the estimator $\mathcal E\{ \xn,\Gb,\mathcal S,f(\cdot)\}$ introduced earlier. Assume that the rows of $\Gb$ are independent Gaussian random vectors with mean $\mu$ and covariance $\Sigb=\Mb\Mb^\t$. Let $\delta:=m/n$ and the set $\mathcal S$ and the penalty function $f(\cdot)$ be convex. $\mathcal E\{ \xn,\Gb,\mathcal S,f(\cdot)\}$ succeed in recovering $\xn$ with probability approaching one (as $m$ and $n$ grow to infinity), if and only if 
	\begin{align}\label{eq:gaussian_width}
		\sqrt\delta> \sqrt{\delta^\star} = \Exp\l[ \max_{\substack{\wb\in (\mathcal S-\xn)\cap  D_f(\xn)\\ \frac{1}{\sqrt n}\Mb^{\t}\wb\in S_{n-1}}} ~\frac{\wb^\t\gb}{n\sqrt{1+\frac{1}{n}(\wb^\t\mu)^2}}    \r]
	\end{align}
	where $S_{n-1}$ is the $n$-dimensional unit sphere, and the expected value is over the Gaussian vector $\gb\sim\mathcal N(\mathbf 0,\Sigb)$.
\end{cor}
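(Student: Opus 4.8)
\emph{Proof plan.} The plan is to recast exact recovery as a statement about $\ker(\Gb)$ meeting the descent cone, turn the resulting random program into a bilinear min-max in a standard Gaussian matrix, invoke the CGMT to pass to the Auxiliary Optimization (AO), and then scalarize the AO by concentration until the right-hand side of \eqref{eq:gaussian_width} emerges. First, since $f$ and $\mathcal S$ are convex, $\mathcal E\{\xn,\Gb,\mathcal S,f(\cdot)\}$ recovers $\xn$ exactly iff there is no nonzero feasible descent direction, i.e. $\{\wb\in(\mathcal S-\xn)\cap D_f(\xn):\Gb\wb=\mathbf 0\}=\{\mathbf 0\}$; writing $\mathcal C$ for the closed convex cone of feasible descent directions (in the main applications $\mathcal S$ is itself a cone, e.g. $\PSD^n$, and $\mathcal C=(\mathcal S-\xn)\cap D_f(\xn)$ up to conic hulls) and using $\Sigb\succ\mathbf 0$ so that $\Mb$ is invertible and $\Mb^\t\wb\neq\mathbf 0$ for $\wb\neq\mathbf 0$, exact recovery is equivalent to $\Phi_n>0$, where
\[
\Phi_n:=\min_{\wb\in\mathcal C,\ \frac1{\sqrt n}\|\Mb^\t\wb\|_2=1}\ \|\Gb\wb\|_2 .
\]
Writing the $\mathcal N(\mu,\Sigb)$ rows of $\Gb$ as $\mu$ plus $\Mb$ times standard normals gives $\Gb=\mathbf 1_m\mu^\t+\Gb_0\Mb^\t$ with $\Gb_0$ having i.i.d.\ $\mathcal N(0,1)$ entries, so with $\|\vb\|_2=\max_{\|\mathbf u\|_2\le1}\mathbf u^\t\vb$ and the substitution $\vb=\Mb^\t\wb$ (so $\mu^\t\wb=(\Mb^{-1}\mu)^\t\vb$) the quantity $\Phi_n$ becomes a min-max over compact $\vb$- and $\mathbf u$-sets whose only randomness is the bilinear term $\mathbf u^\t\Gb_0\vb$, plus the continuous bilinear (hence convex-concave) perturbation $\big((\Mb^{-1}\mu)^\t\vb\big)(\mathbf 1_m^\t\mathbf u)$.

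Applying the CGMT \cite{thrampoulidis2018precise,thrampoulidis2015regularized}, the corresponding Auxiliary Optimization is
\[
\phi_n=\min_{\wb\in\mathcal C,\ \frac1{\sqrt n}\|\Mb^\t\wb\|_2=1}\ \max_{\|\mathbf u\|_2\le1}\ \Big[\ \sqrt n\,\gb^\t\mathbf u+\|\mathbf u\|_2\,(\Mb\mathbf h)^\t\wb+(\mu^\t\wb)(\mathbf 1_m^\t\mathbf u)\ \Big],
\]
with $\gb\in\RR^m$, $\mathbf h\in\RR^n$ independent standard Gaussians. The inner maximization of $\mathbf u^\t\pb+\|\mathbf u\|_2c$ over the unit ball equals $(\|\pb\|_2+c)_+$, so with $\pb=\sqrt n\,\gb+(\mu^\t\wb)\mathbf 1_m$ and $c=(\Mb\mathbf h)^\t\wb$ one gets $\phi_n=\min_{\wb}\big(\|\sqrt n\,\gb+(\mu^\t\wb)\mathbf 1_m\|_2+(\Mb\mathbf h)^\t\wb\big)_+$. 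Two reductions, both uniform in $\wb$, scalarize this: since $\tfrac1m\|\gb\|_2^2\to1$, $\tfrac1m\mathbf 1_m^\t\gb\to0$ in probability and $2\sqrt n|\mu^\t\wb|\le n+(\mu^\t\wb)^2$, one gets $\|\sqrt n\,\gb+(\mu^\t\wb)\mathbf 1_m\|_2=\sqrt{mn}\sqrt{1+(\mu^\t\wb)^2/n}\,(1+o_P(1))$; and $(\Mb\mathbf h)^\t\wb=\gb_{\Sigb}^\t\wb$ with $\gb_{\Sigb}:=\Mb\mathbf h\sim\mathcal N(\mathbf 0,\Sigb)$ (the vector called $\gb$ in \eqref{eq:gaussian_width}). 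Hence $\phi_n=0$ iff, after dividing by $\sqrt n$ and using $-\gb_{\Sigb}\overset{d}{=}\gb_{\Sigb}$,
\[
\max_{\wb\in\mathcal C,\ \frac1{\sqrt n}\|\Mb^\t\wb\|_2=1}\ \frac{\wb^\t\gb_{\Sigb}}{n\sqrt{1+\tfrac1n(\wb^\t\mu)^2}}\ \ge\ \sqrt{m/n}\,(1+o_P(1))=\sqrt\delta\,(1+o_P(1));
\]
viewed as a function of $\mathbf h$ the left side is $\tfrac1{\sqrt n}$-Lipschitz (its maximizing coefficient $\Mb^\t\wb/(n\sqrt{\cdots})$ has norm $\le n^{-1/2}$ since $\|\Mb^\t\wb\|_2=\sqrt n$ and the denominator is $\ge n$), so by Gaussian concentration it converges in probability to its mean, which is exactly $\sqrt{\delta^\star}$ of \eqref{eq:gaussian_width}.

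Putting the pieces together: if $\sqrt\delta>\sqrt{\delta^\star}$ the last inequality fails w.h.p., so $\phi_n>0$ (indeed $\phi_n\ge c\sqrt{mn}$ for some fixed $c>0$) w.h.p., and the always-valid direction of the CGMT, $\pr[\Phi_n<t]\le 2\,\pr[\phi_n\le t]$, forces $\Phi_n>0$ w.h.p.\ — exact recovery. If $\sqrt\delta<\sqrt{\delta^\star}$ then $\phi_n=0$ w.h.p., and one wants the reverse conclusion $\Phi_n=0$ (recovery fails) w.h.p. I expect this reverse transfer to be the main obstacle: the two-sided form of the CGMT requires a \emph{convex} feasible set for $\wb$, whereas the normalization $\tfrac1{\sqrt n}\|\Mb^\t\wb\|_2=1$ is a sphere. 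I would resolve it in one of the standard ways — either re-expressing ``$\ker\Gb\cap\mathcal C\neq\{\mathbf 0\}$'' through the polar cone of $\mathcal C$ to obtain a genuinely convex feasibility program whose CGMT analysis returns the same threshold (the $(\mu,\Sigb)$-weighted analogue of the statistical-dimension/Gaussian-width computation of \cite{amelunxen2014living,chandrasekaran2012convex}), or running the CGMT on the convex relaxation $\tfrac1{\sqrt n}\|\Mb^\t\wb\|_2\le1$ together with a linear constraint that pins the optimum to the boundary, as is customary in CGMT phase-transition arguments \cite{thrampoulidis2018precise}. Either route leaves $\delta^\star$ untouched, and combining the two regimes proves the stated equivalence; the boundary case $\sqrt\delta=\sqrt{\delta^\star}$ need not be analyzed.
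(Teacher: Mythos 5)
The paper never actually proves Corollary~\ref{cor:general}: the Proofs section establishes only Theorem~\ref{thm:1} and its supporting lemmas, and the corollary is simply asserted to follow from the CGMT framework. Your sketch is precisely the route the authors intend, and its computations are sound: the reduction of exact recovery to the escape condition $\ker(\Gb)\cap(\mathcal S-\xn)\cap \mathcal D_f(\xn)=\{\mathbf 0\}$, the decomposition $\Gb=\mathbf 1_m\mu^\t+\Gb_0\Mb^\t$, the passage to the auxiliary optimization, the identity $\max_{\|\mathbf u\|_2\le 1}\l(\mathbf u^\t\pb+c\|\mathbf u\|_2\r)=(\|\pb\|_2+c)_+$, the uniform-in-$\wb$ approximation $\|\sqrt n\,\gb+(\mu^\t\wb)\mathbf 1_m\|_2=\sqrt{mn}\sqrt{1+(\mu^\t\wb)^2/n}\,(1+o_P(1))$, and the $n^{-1/2}$-Lipschitz concentration of the resulting maximum all check out and reproduce the threshold in \eqref{eq:gaussian_width}. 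One reading caveat: for your first equivalence you must interpret $\mathcal D_f(\xn)$ as the cone generated by the descent \emph{directions} $\{\yb-\xn:\ f(\yb)\le f(\xn)\}$; as literally written in the paper it is the conic hull of the sublevel set itself, under which the null-space characterization would not hold.

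The one genuine gap is the one you flag yourself and do not close: the converse (failure) direction. Gordon's one-sided inequality $\pr[\Phi_n<t]\le 2\,\pr[\phi_n\le t]$ holds for arbitrary compact constraint sets and settles the regime $\sqrt\delta>\sqrt{\delta^\star}$, but concluding that recovery fails when $\sqrt\delta<\sqrt{\delta^\star}$ requires the reverse comparison, which in the CGMT needs convex--compact feasible sets, and the normalization $\frac1{\sqrt n}\|\Mb^\t\wb\|_2=1$ is a sphere, not a convex set. Of your two proposed repairs, the polar-cone route (the $(\mu,\Sigb)$-weighted analogue of the kinematic/statistical-dimension argument of Amelunxen et al.) is the one that genuinely works here, because in the failure regime one must exhibit a nonzero point of $\ker(\Gb)$ inside the cone rather than merely lower-bound an optimal value; the ball relaxation $\frac1{\sqrt n}\|\Mb^\t\wb\|_2\le 1$ by itself only shows the relaxed AO vanishes (it always does, at $\wb=\mathbf 0$) and needs the additional boundary-pinning argument you mention. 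So the proposal is a correct and essentially complete proof of the sufficiency half, and a credible but unexecuted plan for the necessity half; since the paper supplies no proof at all, this is strictly more than the source provides.
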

\textbf{["Pseudo Gaussian Width"]} When $\mu=\mathbf 0$ and $\Sigb=\mathbf I$, the expected value in \eqref{eq:gaussian_width} resembles the definition of the \emph{Gaussian width}~\cite{rudelson2006sparse}. It has been shown that when the measurements are i.i.d. Gaussian, the square of the Gaussian width indicates the phase transition for linear inverse problems~\cite{chandrasekaran2012convex,amelunxen2014living,stojnic2013upper}. The Gaussian width has been computed for several interesting examples, such as sparse recovery, and low-rank matrix recovery. Using our universality result in Theorem~\ref{thm:1}, we can state that the square of the Gaussian width indicates the phase transition in the non-Gaussian setting as well.
\subsection{Numerical Results}\label{sec:sim}
To validate the result of Theorem~\ref{thm:1}, we performed numerical simulations under various distributions for the measurement vectors. For our simulations in Figure~\ref{fig:1}, we use the estimator $\mathcal E\{ \xn,\Ab,\RR^n,\|\cdot\|_{\ell_1}\}$ to recover a $k$-sparse signal $\xn$ under three random ensembles for the measurement vectors $\{ \ab_i \}_{i=1}^m$. 
In each of the three plots, we computed the norm of the estimation error $\mathcal E\{ \xn,\Ab,\RR^n,\|\cdot\|_{\ell_1}\}$, for different over sampling ratios $ \delta=m/n$ and multiple sparsity factors $s=k/n$. We generated the measurement vectors $\{\ab_i \}_{i=1}^m$ for each figure, as follows,
\begin{itemize}
	\item For each trial, we generate a random matrix $\Mb\in\RR^{n\times n}$, with i.i.d. standard Gaussian random variables. $\Sigb=\Mb\Mb^\t$ will play the role of the covariance matrix of the measurement vectors.
	\item For Figure \ref{fig:1a}, $\{\ab_i \}_{i=1}^m$ are drawn independently from the Gaussian distribution $\mathcal N (\mathbf 0,\Sigb)$.
	\item For the measurement vectors of the Figure \ref{fig:1b}, we first generate i.i.d centered bernouli vectors $\text{Ber(.8)}$, and multiply each vector by $\Mb$.
	\item For the measurement vectors of the Figure \ref{fig:1c}, we first generate i.i.d centered $\chi_1$ vectors, and multiply each vector by $\Mb$.
\end{itemize}
The blue line in the figures shows the theoretical phase transition derived as a result of Corollary~\ref{cor:general}. It can be observed that  the phase transition for all the three random schemes is the same, as predicted by Theorem \ref{thm:1}. It also matches the theoretical phase transition derived from Corollary \ref{cor:general}. 
\begin{figure*}[h]
	\centering
	\begin{subfigure}[b]{0.5\textwidth}
		\centering
		\includegraphics[width=\textwidth]{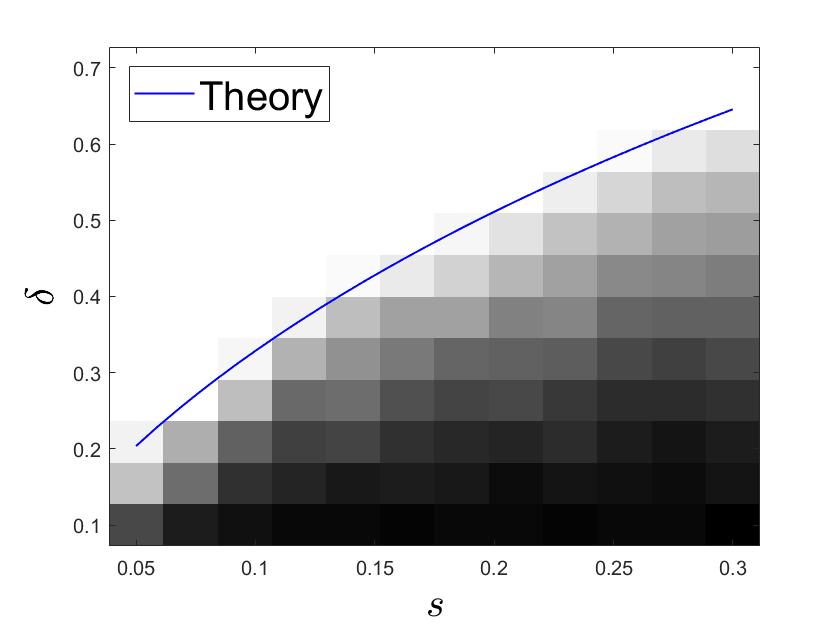}
		
		\caption{\small{~}}
		\label{fig:1a}
	\end{subfigure}%
	~ 
	\begin{subfigure}[b]{0.5\textwidth}
		\centering
		\includegraphics[width=\textwidth]{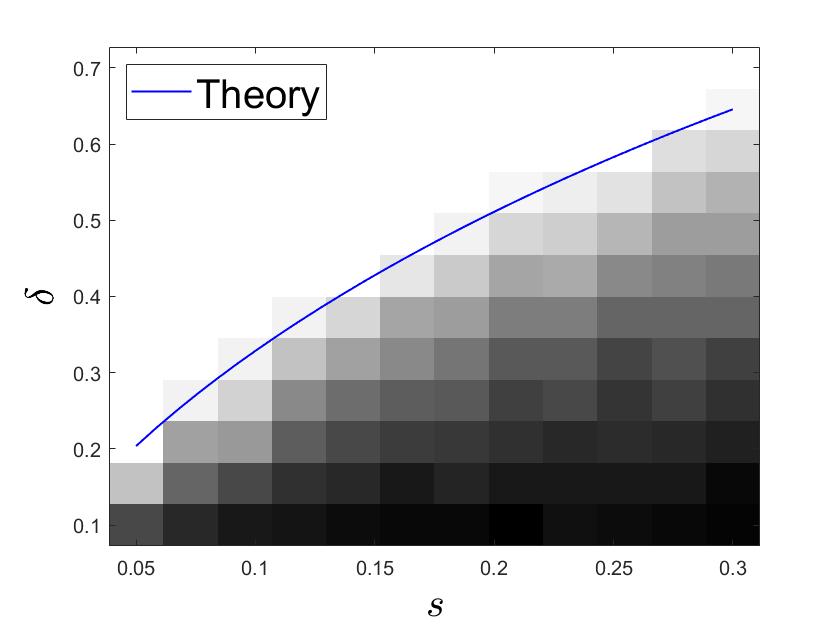}
		\caption{\small{ ~}}
		\label{fig:1b}
	\end{subfigure}
	~
	\begin{subfigure}[b]{0.5\textwidth}
		\centering
		\includegraphics[width=\textwidth]{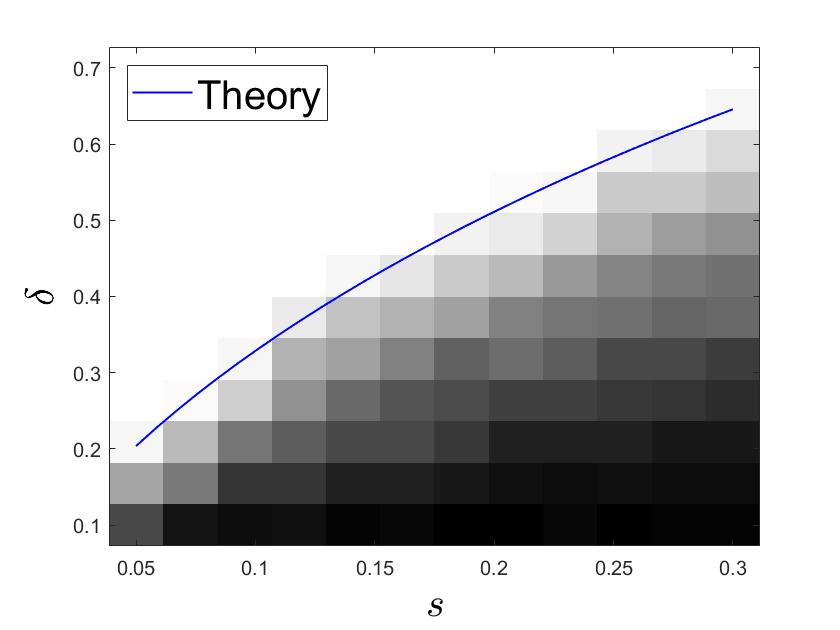}
		
		\caption{\small{ ~}}
		\label{fig:1c}
	\end{subfigure}%
	\caption{\small{Phase transition regimes for the estimator $\mathcal E\{ \xn,\Ab,\RR^n,\|\cdot\|_{\ell_1}\}$, in terms of the oversampling ratio $\delta=\frac{m}{n}$ and $s=\frac{\|\xn\|_0}{n}$, for the cases of (a) Gaussian measurements and (b) Bernoulli measurements and (c) $\chi^2$ measurements. The blue lines indicate the theoretical estimate for the phase transition derived from Corollary \ref{cor:general}. In the simulations we used vectors of size $n=256$. The data is averaged over 10 independent realization of the measurements. }}\label{fig:1}
\end{figure*}

Next, to illustrate the applicability and the implications of the results, we present some examples where our universality theorem can be applied.

\section{Applications: Quadratic Measurements}\label{sec:quad}
In this section we consider the problem of recovering a matrix from (so-called) \textit{quadratic measurements}. The goal is to reconstruct a symmetric matrix $\Xn\in\RR^{n\times n}$ in a convex set $\mathcal S$, given $m$ measurements of the form, 
\begin{align}\label{eq:quad_meas}
	y_i=\ab_i^\t\Xn\ab_i=\tr\l(\Xn\cdot (\ab_i\ab_i^\t)\r),\quad i=1,\dots,m~.
\end{align}
Depending on the application, the matrix $\Xn$ may exhibit various structures. Similar to \eqref{eq:conv_est}, we use the convex penalty function $f:\RR^{n\times n}\rightarrow n$, to enforce this structure via the following convex estimator,
\begin{align}\label{eq:conv_quad}
	\Xhb=\arg&\min\limits_{\substack{\Xb\in\mathcal S}}~f(\Xb)\nonumber\\
	&\text{subject to:}\quad \ab_i^\t\Xb\ab_i=\ab_i^\t\Xn\ab_i,\quad i=1,\dots,m~.
\end{align}
Note that the measurements in~\eqref{eq:quad_meas} are linear with respect to the matrix $\Xn$, yet quadratic with respect to the measurement vectors $\ab_i$.  We can define $\tilde{\mathbf x}_0:=\text{Vec}(\Xn)\in\RR^{n^2}$ and $\tilde{\mathbf a}_i:=\text{Vec}(\ab_i\ab_i^\t)\in\RR^{n^2}$, such that the measurements take the familiar form, $y_i=\tilde{\mathbf a}_i^\t\tilde{\mathbf x}_0$. In order to apply the result of  Theorem~\ref{thm:1}, one should check if the vectors $\{ \tilde{\mathbf a}_i \}_{i=1}^m$ satisfy Assumption~\ref{asm:1}. \\
It can be  shown that if the vectors $\{\ab_i\}_{i=1}^m$ satisfy the following conditions, then Assumption~\ref{asm:1} holds true for $\{ \tilde{\mathbf a}_i=\vecc(\ab_i\ab_i^\t) \}_{i=1}^m$~. 
\begin{asm}\label{asm:3}
	We say vectors $\{\ab_i\}_{i=1}^m$ satisfy Assumption 3, if
	\begin{enumerate}
		\item $\ab_i$'s are drawn independently from a sub-Gaussian distribution.
		\item For each $i$, the entries of $\ab_i$ are independent, zero-mean and unit-variance.
	\end{enumerate} 
\end{asm}
In particular, this assumption is valid when  $\{\ab_i\}$'s have i.i.d. standard normal entries. Therefore, when  Assumption~\ref{asm:3} holds, we can apply Theorem~\ref{thm:1} to show that  the required number of measurements for perfect recovery in \eqref{eq:conv_quad} is equal to the required number of measurements for the success of the following estimator,
\begin{align}\label{eq:conv_quad_aux}
	\Xhb=\arg&\min\limits_{\substack{\Xb\in\mathcal S}}~f(\Xb)\nonumber\\
	&\text{subject to:}\quad \tr\l((\Hb_i+\mathbf I)\Xb\r)=\tr\l((\Hb_i+\mathbf I)\Xn\r),\quad i=1,\dots,m~,
\end{align}
where $\mathbf I $ is the $n\times n$ identity matrix and $\Hb_i $'s are independent Gaussian Wigner matrices (defined in Section \ref{sec:prem}). Corollary \ref{cor:quad} presents a formal statement.
\begin{cor}\label{cor:quad}
	Consider the problem of recovering the matrix $\Xn\in\mathcal S\subseteq\RR^{n\times n}$, from $m$ quadratic measurements of the form \eqref{eq:quad_meas}, using the estimator \eqref{eq:conv_quad}. Let $\mathcal S$ and $f(\cdot)$ be convex set and function satisying Assumption \ref{asm:2}. Assume,
	\begin{itemize}
		\item The measurement vectors $\{ \ab_i \}_{i=1}^m$ satisfy Assumption \ref{asm:3}, and,
		\item $\{ \Hb_i \in\RR^{n\times n}\}_{i=1}^m$ is a set of independent Gaussian Wigner matrices.
	\end{itemize}
	Then, as $m$ and $n$ grow to infinity at a fixed rate $m=\theta(n)$, the estimator \eqref{eq:conv_quad} perfectly recovers $\Xn$ with probability approaching one if and only if the estimator \eqref{eq:conv_quad_aux} perfectly recovers $\Xn$ with probability approaching one.
\end{cor}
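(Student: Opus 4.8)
The plan is to derive Corollary~\ref{cor:quad} as a direct application of Theorem~\ref{thm:1} to the ``lifted'' vectors $\tilde{\mathbf a}_i = \vecc(\ab_i\ab_i^\t)$ and unknown $\tilde{\mathbf x}_0 = \vecc(\Xn)$. First I would verify that the lifted measurement matrix $\tilde{\mathbf A} = [\tilde{\mathbf a}_1,\dots,\tilde{\mathbf a}_m]^\t \in \RR^{m\times n^2}$ satisfies Assumption~\ref{asm:1} with the correct mean vector and covariance matrix. The rows are iid since the $\ab_i$ are, so the task reduces to three computations. (i) Mean: since the entries of $\ab_i$ are independent, zero-mean, unit-variance, we have $\Exp[\ab_i\ab_i^\t] = \mathbf I$, so $\mu = \vecc(\mathbf I)$. (ii) Covariance: a standard fourth-moment calculation gives $\cov[\tilde{\mathbf a}_i] = \Exp[\vecc(\ab_i\ab_i^\t)\vecc(\ab_i\ab_i^\t)^\t] - \vecc(\mathbf I)\vecc(\mathbf I)^\t$, which for independent entries with finite fourth moments evaluates (in matrix-index form) to a matrix whose action on $\vecc(\Xb)$ is $\vecc(\Xb + \Xb^\t) + (\text{excess-kurtosis terms on the diagonal})$; in the Gaussian case the excess kurtosis vanishes and one gets exactly $\cov[\tilde{\mathbf a}_i]\,\vecc(\Xb) = \vecc(\Xb+\Xb^\t)$ for symmetric $\Xb$, i.e., the covariance acts as $2\,\mathbf I$ on the symmetric subspace. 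This is precisely the covariance of $\vecc(\Hb_i + \mathbf I)$, where $\Hb_i$ is a Gaussian Wigner matrix: $\Exp[\vecc(\Hb_i+\mathbf I)] = \vecc(\mathbf I) = \mu$ and $\cov[\vecc(\Hb_i+\mathbf I)] = \cov[\vecc(\Hb_i)]$, which by the Wigner normalization ($\mathcal N(0,1)$ off-diagonal, $\mathcal N(0,2)$ diagonal) matches the symmetric-subspace covariance computed above. (iii) The remaining items of Assumption~\ref{asm:1}: sub-exponential tails of $\tilde{\mathbf a}_i$ follow because products of two sub-Gaussians are sub-exponential (Assumption~\ref{asm:3}.1); the bounded-mean condition~1.2 holds since $\|\mu\|_2^2 = \|\vecc(\mathbf I)\|^2 = n$ while $\Exp[\|\tilde{\mathbf a}_i - \mu\|^2] = \tr(\cov[\tilde{\mathbf a}_i]) = \Theta(n^2)$, giving the ratio $O(n^{-1})$; and the bounded-power condition~1.3, $\mathrm{Var}(\|\tilde{\mathbf a}_i\|^2)/\Exp^2[\|\tilde{\mathbf a}_i-\mu\|^2] = O(n^{-\tau_2})$, follows from an eighth-moment estimate on the sub-Gaussian entries of $\ab_i$ together with the $\Theta(n^2)$ lower bound on the denominator.

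Once Assumption~\ref{asm:1} is established for $\tilde{\mathbf A}$ with $\mu = \vecc(\mathbf I)$ and $\Sigb = \cov[\tilde{\mathbf a}_i]$, Theorem~\ref{thm:1} applies directly: the estimator $\mathcal E\{\tilde{\mathbf x}_0,\tilde{\mathbf A},\mathcal S,f(\cdot)\}$ — which is exactly \eqref{eq:conv_quad} rewritten as a linear inverse problem in $n^2$ dimensions — succeeds with probability approaching one if and only if the Gaussian estimator $\mathcal E\{\tilde{\mathbf x}_0,\Gb,\mathcal S,f(\cdot)\}$ does, where the rows of $\Gb$ are drawn iid from $\mathcal N(\mu,\Sigb)$. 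The last step is to identify $\Gb$ with the Wigner construction: a Gaussian vector with mean $\vecc(\mathbf I)$ and covariance $\Sigb$ has the same distribution as $\vecc(\Hb_i + \mathbf I)$ for a Gaussian Wigner $\Hb_i$, since both are Gaussian with matching first and second moments (one should note the covariance is singular — supported on the symmetric subspace — so ``same distribution'' is understood on that subspace, which is all that matters because both $\tilde{\mathbf x}_0$ and the constraint only see the symmetric part). Under this identification the Gaussian estimator is precisely \eqref{eq:conv_quad_aux}, since $\tr((\Hb_i+\mathbf I)\Xb) = \vecc(\Hb_i+\mathbf I)^\t\vecc(\Xb) = \gb_i^\t\Xb$. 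This yields the claimed equivalence.

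One technical subtlety to flag: Theorem~\ref{thm:1} and Assumption~\ref{asm:1} are stated with $\Sigb \succ 0$ (strictly positive definite), but here $\Sigb = \cov[\tilde{\mathbf a}_i]$ is rank-deficient, vanishing on the anti-symmetric subspace of $\RR^{n^2}$. The clean fix is to work throughout in the $\binom{n+1}{2}$-dimensional subspace of symmetrized vectorizations (equivalently, use $\svecc$): the measurements $\ab_i^\t\Xb\ab_i$ depend only on the symmetric part of $\Xb$, and $\Xn$ is assumed symmetric, so restricting $\mathcal S$, $f$, and all vectors to this subspace loses nothing, and on it $\Sigb$ is strictly positive definite. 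I expect this reduction-to-the-symmetric-subspace bookkeeping, together with the verification of Assumption~\ref{asm:1}.3 (the bounded-power / higher-moment estimate, which requires controlling $\mathrm{Var}(\|\ab_i\ab_i^\t\|_F^2) = \mathrm{Var}(\|\ab_i\|^4)$), to be the main obstacle — everything else is a moment computation plus a direct invocation of Theorem~\ref{thm:1}.
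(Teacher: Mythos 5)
Your route is the same as the paper's: the paper gives no separate proof of Corollary~\ref{cor:quad} at all --- it simply asserts in the main text that Assumption~\ref{asm:1} holds for the lifted vectors $\vecc(\ab_i\ab_i^\t)$ whenever the $\ab_i$ satisfy Assumption~\ref{asm:3}, and then invokes Theorem~\ref{thm:1}, which is exactly your plan. Your moment computations (mean $\vecc(\mathbf I)$, covariance acting as $\vecc(\Xb+\Xb^\t)$ on the symmetric part, the $\|\mu\|^2/\Exp[\|\tilde{\mathbf a}_i-\mu\|^2]=O(n^{-1})$ ratio, and the $\mathrm{Var}(\|\ab_i\|^4)=O(n^3)$ estimate) are correct, and your observation that $\cov[\vecc(\ab_i\ab_i^\t)]$ is singular on the antisymmetric subspace --- so that Assumption~\ref{asm:1}'s requirement $\Sigb\succ 0$ forces a reduction to symmetrized vectorizations --- is a genuine issue the paper glosses over; your fix is the right one.

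One point you surface but do not close: the identification of the Theorem~\ref{thm:1} Gaussian equivalent with the standard Wigner ensemble $\Hb_i+\mathbf I$ requires the entries of $\ab_i$ to have fourth moment equal to $3$. Assumption~\ref{asm:3} imposes only sub-Gaussianity with zero mean and unit variance, so in general the diagonal entries of $\cov[\vecc(\ab_i\ab_i^\t)]$ equal $\Exp[a_{ij}^4]-1\neq 2$, and Theorem~\ref{thm:1} then yields equivalence to a Gaussian ensemble whose diagonal variance differs from the Wigner normalization $\mathcal N(0,2)$. To obtain the corollary as stated one must either add a kurtosis hypothesis or argue separately that this discrepancy, supported on only $n$ of the $\binom{n+1}{2}$ relevant coordinates, cannot shift the phase transition. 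The paper is silent on this as well, so it is a shared gap rather than a defect peculiar to your argument, but it should be flagged as the one step that a direct invocation of Theorem~\ref{thm:1} does not cover.
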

Therefore, in order to find the phase transition, it is sufficient to analyze the equivalent optimization~\eqref{eq:conv_quad_aux} which is possible via the CGMT framework. Proceeding onward, we exploit the CGMT framework along with Corollary~\ref{cor:general} to find the required number of measurements for  the recovery of $\Xn$ in two specific applications.
\subsection{Low-rank Matrix Recovery}
Assume the unknown matrix $\Xn\succeq \mathbf 0$ has rank $r$, where $r$ is a constant ( i.e., $r$ does not grow with problem dimensions $n,m$.) Such matrices appear in many applications such as traffic data monitoring, array signal processing and phase retrieval. The nuclear norm, $||\cdot||_{\star}$, is often used as the convex surrogate for low-rank matrix recovery~\cite{recht2010guaranteed}. Hence, we are interested in analyzing the optimization \eqref{eq:conv_quad}, with the choice of $f(\Xb)=\|\Xb\|_\star$, where the optimization is over the set of PSD matrices. Note that $\tr(\cdot)=||\cdot||_{\star}$ within this set, which satisfies Assumption \ref{asm:2}. \\
According to Corollary \ref{cor:quad}, the perfect recovery in~\eqref{eq:conv_quad} is equivalent to perfect recovery in \eqref{eq:conv_quad_aux}, where the same choice of $f(\Xb)=\tr(\Xb)$. The analysis of the later through CGMT yields the following corollary.
\begin{cor}\label{cor:lowrank}
	Consider the optimization program~\eqref{eq:conv_quad}, where the matrix $\Xn\succeq 0$ has rank $r$, $f(\Xb)=\tr(\Xb)$, the set $\mathcal S$ is the PSD cone and the measurement vectors $\{\ab_i\}_{i=1}^m$ satisfy Assumption~\ref{asm:3}. Assume $m,n\rightarrow \infty$ at the proportional rate $\delta:=\frac{m}{n}\in (0,+\infty)$. The estimator perfectly recovers $\Xn$ if $\delta>3r$.
\end{cor}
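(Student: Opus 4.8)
The plan is to chain the two reductions that are already in place and then evaluate a Gaussian width. Since $f(\Xb)=\tr(\Xb)$ is linear, hence separable, and satisfies Assumption~\ref{asm:2}, and since the $\ab_i$ satisfy Assumption~\ref{asm:3}, Corollary~\ref{cor:quad} lets me replace \eqref{eq:conv_quad} by the auxiliary program \eqref{eq:conv_quad_aux}: recover the rank-$r$ matrix $\Xn\succeq 0$ from the measurements $\tr\big((\Hb_i+\mathbf I)\Xb\big)$, $i=1,\dots,m$, over $\mathcal S=\PSD^n$, with $f=\tr$. I then read this as a linear inverse problem over the $N=\binom{n+1}{2}$-dimensional space of symmetric matrices and invoke Corollary~\ref{cor:general}.

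To instantiate \eqref{eq:gaussian_width} I need the first two moments of $\svecc(\Hb_i+\mathbf I)$ and the relevant cone. The mean is $\mu=\svecc(\mathbf I)$, and the covariance is $\Sigb=2\mathbf I_N$; the latter is the key structural observation and holds because the Wigner scaling (diagonal entries $\mathcal N(0,2)$, off-diagonal $\mathcal N(0,1)$) makes $\svecc(\Hb_i)$ a vector of i.i.d.\ $\mathcal N(0,2)$ coordinates, equivalently $\tr(\Hb_i\Wb)\sim\mathcal N(0,2\|\Wb\|_F^2)$ for every symmetric $\Wb$. Thus $\Sigb=\Mb\Mb^\t$ with $\Mb=\sqrt2\,\mathbf I_N$. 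For the cone, diagonalize $\Xn=\diag(\Lambda_r,\mathbf 0)$ with $\Lambda_r\succ0$ of size $r$, and write a symmetric $\Wb$ in the corresponding $2\times2$ blocks $(\Wb_{11},\Wb_{12},\Wb_{22})$: the tangent cone of $\PSD^n$ at $\Xn$ is $\{\Wb:\Wb_{22}\succeq0\}$ and the descent cone of $\tr$ is $\{\Wb:\tr(\Wb)\le0\}$, so the feasible descent set in \eqref{eq:gaussian_width} is the cone $\mathcal C=\{\Wb:\Wb_{22}\succeq0,\ \tr(\Wb)\le0\}$. With $\Mb=\sqrt2\,\mathbf I_N$ the constraint $\tfrac1{\sqrt N}\Mb^\t\wb\in S_{N-1}$ reads $\|\Wb\|_F^2=N/2$, the bias term becomes $\tfrac1N\tr(\Wb)^2$, and the numerator $\wb^\t\gb$ becomes $\tr(\Hb\Wb)$ with $\Hb$ a Wigner matrix.

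Only the sufficiency direction is claimed, so it is enough to upper bound $\sqrt{\delta^\star}$. Since $\tr(\Wb)\le0$ on $\mathcal C$ makes the denominator of \eqref{eq:gaussian_width} at least $N$, it suffices to bound $\tfrac1N\,\Exp\big[\sup_{\Wb\in\mathcal C,\ \|\Wb\|_F^2=N/2}\tr(\Hb\Wb)\big]$. Split $\tr(\Hb\Wb)=\tr(\Hb_{11}\Wb_{11})+2\langle\Hb_{12},\Wb_{12}\rangle+\tr(\Hb_{22}\Wb_{22})$. With $r$ fixed the first term is $O(1)$; the second obeys $2\langle\Hb_{12},\Wb_{12}\rangle\le2\|\Hb_{12}\|_F\|\Wb_{12}\|_F\le2\sqrt{rn}\,\|\Wb_{12}\|_F(1+o(1))$; and since $\Wb_{22}\succeq0$, $\tr(\Hb_{22}\Wb_{22})\le\lambda_{\max}(\Hb_{22})\,\tr(\Wb_{22})\le\lambda_{\max}(\Hb_{22})\,\big(-\tr(\Wb_{11})\big)\le 2\sqrt n\cdot\sqrt r\,\|\Wb_{11}\|_F(1+o(1))$, using $\lambda_{\max}(\Hb_{22})\le 2\sqrt n\,(1+o(1))$ from the semicircle law and $-\tr(\Wb_{11})\le\sqrt r\,\|\Wb_{11}\|_F$. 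Writing $p=\|\Wb_{11}\|_F$, $q=\|\Wb_{12}\|_F$ so that $p^2+2q^2\le N/2$, Cauchy--Schwarz gives $p+q\le\sqrt{3/2}\,\sqrt{p^2+2q^2}\le\sqrt{3N/4}$, whence $\tr(\Hb\Wb)\le\sqrt{3rnN}\,(1+o(1))$ on a high-probability event and, after a Gaussian-concentration bound on the supremum, $\sqrt{\delta^\star}\le\sqrt{3rn/N}\,(1+o(1))$, i.e.\ $N\delta^\star\le 3rn\,(1+o(1))$. Corollary~\ref{cor:general}, whose oversampling ratio here is $m/N$, then yields recovery whenever $m>N\delta^\star$; since $\delta=m/n>3r$ is a fixed constant, $m=\delta n>3rn\ge N\delta^\star$ for every sufficiently large $n$, which proves the corollary.

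The only genuinely delicate point is the treatment of the PSD block $\Wb_{22}$ against the trace constraint: one must notice that its contribution is controlled not by its own Frobenius mass but by $\|\Wb_{11}\|_F$ (through $\tr(\Wb_{22})\le\sqrt r\,\|\Wb_{11}\|_F$) and by the spectral edge $\lambda_{\max}(\Hb_{22})\approx2\sqrt n$ of a Wigner matrix. For the matching lower bound — not needed for this corollary, but showing that $3rn$ is also necessary — one would verify this is tight by placing $\tr(\Wb_{22})$ worth of mass on the top $o(n)$ eigenvectors of $\Hb_{22}$, which costs only $o(1)$ Frobenius budget, and dividing the remaining budget between $\Wb_{11}$ and $\Wb_{12}$ (optimum: a $2/3$ fraction to $\Wb_{11}$, giving exactly $\sqrt{3rn}$); this step requires a partial-sum estimate for the edge eigenvalues of a Wigner matrix and is the one technical ingredient. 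Everything else — the reduction (Corollary~\ref{cor:quad}), the passage to a pseudo-Gaussian-width (Corollary~\ref{cor:general}), and the descent-cone identification — is routine.
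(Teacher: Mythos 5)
Your proposal is correct and follows exactly the route the paper intends: reduce \eqref{eq:conv_quad} to the Wigner-measurement program \eqref{eq:conv_quad_aux} via Corollary~\ref{cor:quad}, then bound the pseudo-Gaussian width of $(\PSD^n-\Xn)\cap\mathcal D_{\tr}(\Xn)$ via Corollary~\ref{cor:general}. The paper never actually writes this second step out --- it asserts that ``analysis through CGMT yields the corollary'' and leans on the known $3nr$ Gaussian-width computation from the cited literature --- so your block decomposition $(\Wb_{11},\Wb_{12},\Wb_{22})$, the chain $\tr(\Hb_{22}\Wb_{22})\le\lambda_{\max}(\Hb_{22})\tr(\Wb_{22})\le 2\sqrt{n}\,\sqrt{r}\,\|\Wb_{11}\|_F(1+o(1))$, and the Cauchy--Schwarz optimization yielding $\sqrt{3rnN}$ are precisely the argument the paper outsources, and they recover the right constant. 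Two small points to tighten, neither of which affects the conclusion: (i) the $\Hb_{11}$ term is not $O(1)$ but $O(\|\Hb_{11}\|_F\,\|\Wb_{11}\|_F)=O(n)$, since $\|\Wb_{11}\|_F$ can be as large as $\sqrt{N/2}=\Theta(n)$; it is still $o(n^{3/2})$ and hence negligible against the main term, but the stated order is wrong; (ii) \eqref{eq:gaussian_width} is an expectation, so the high-probability bounds on $\|\Hb_{12}\|_F$ and $\lambda_{\max}(\Hb_{22})$ must be supplemented by a crude bound such as $\tr(\Hb\Wb)\le\|\Hb\|_F\|\Wb\|_F$ on the exceptional event, together with a tail estimate showing that event contributes $o(n^{3/2})$ to the expectation --- you gesture at this with ``a Gaussian-concentration bound on the supremum,'' but it deserves an explicit line. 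Your closing remark correctly identifies the only genuinely delicate ingredient for the matching lower bound (placing the PSD mass of $\Wb_{22}$ on the top $o(n)$ eigenvectors of $\Hb_{22}$), which the corollary as stated does not require.
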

Corollary~\ref{cor:lowrank} indicates that $3rn$ measurements is needed to perfectly recover a rank-$r$ PSD matrix $\Xn$, from quadratic measurements. Although, the error of estimation gets extremely small, much before the threshold $m=3nr$. To the extent of our knowledge, this is the first work that precisely computes the phase transition of low-rank matrix recovery from quadratic measurements. Figure\ref{fig:2} depicts the result of numerical simulations. For different values of $r$ and $\delta$, the Frobenius norm of the error of the estimators~\eqref{eq:conv_quad} and~\eqref{eq:conv_quad_aux} has been computed, which shows the same phase transition in both cases.
\begin{figure*}[h]
	\centering
	\begin{subfigure}[b]{0.5\textwidth}
		\centering
		\includegraphics[width=\textwidth]{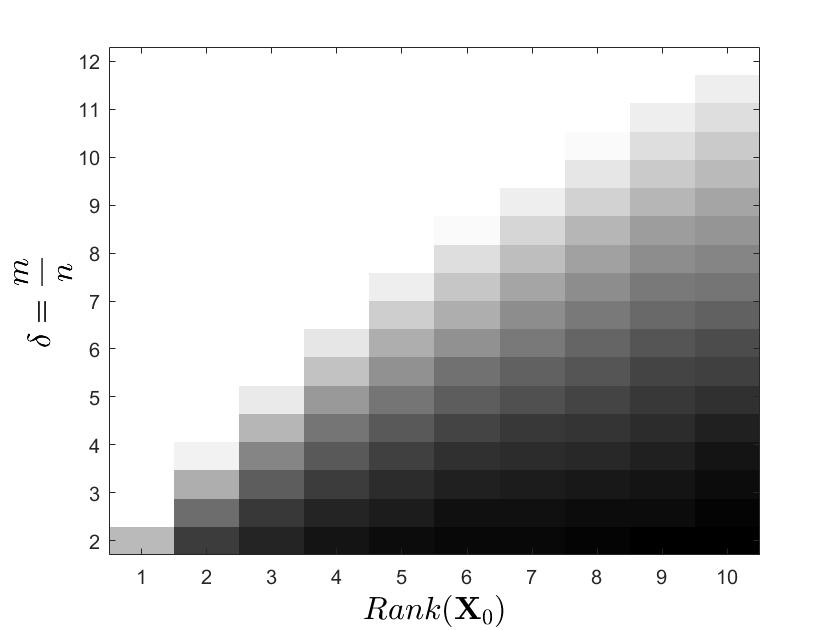}
		
		\caption{\small{ ~}}
		\label{fig:2a}
	\end{subfigure}%
	~ 
	\begin{subfigure}[b]{0.5\textwidth}
		\centering
		\includegraphics[width=\textwidth]{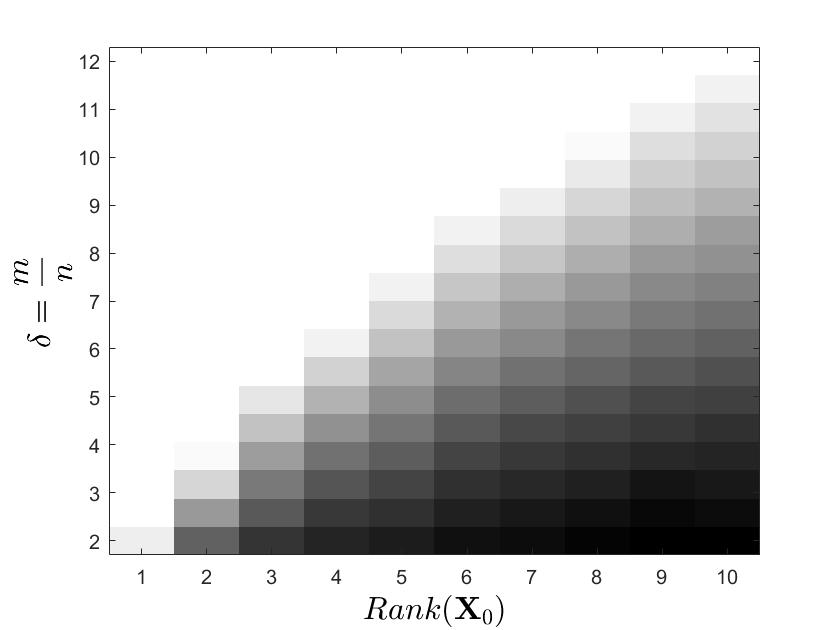}
		\caption{\small{ ~}}
		\label{fig:2b}
	\end{subfigure}
	\caption{\small{Phase transition regimes for both estimators \ref{eq:conv_quad} and \eqref{eq:conv_quad_aux}, with $f(\Xb)=\tr(\Xb)$, in terms of the oversampling ratio $\delta=\frac{m}{n}$ and $r=\text{Rank}(\Xn)$, for the cases of (a) estimator \eqref{eq:conv_quad} with quadratic measurements and (b) estimator \eqref{eq:conv_quad_aux} with Gaussian measurements. In the simulations we used matrices of size $n=40$. The data is averaged over 20 independent realization of the measurements.}}\label{fig:sparse_mat}
	\label{fig:2}
\end{figure*}
\subsubsection{Phase Transition of PhaseLift in Phase Retrieval}
An important application for the result of  Corollary~\ref{cor:lowrank}, is when the underlying matrix $\Xn$ is of rank $1$. This appears in the problem of phase retrieval, where $\Xn=\xn\xn^T$ is the lifted version of the signal. The optimization program~\eqref{eq:conv_quad} with $f(\Xb)=\tr(\Xb)$ in this case, is known as PhaseLift~\cite{candes2013phaselift}. Corollary~\ref{cor:lowrank} states that the phase transition of the PhaseLift algorithm happens at $\delta^\star=3$, i.e., $m>3n$ measurements is needed for the perfect signal reconstruction in PhaseLift. We should emphasize the significance of this result as establishing the exact phase transition of the PhaseLift algorithm was long an open problem.
\subsection{Sparse Matrix Recovery}
Let $\Xn\succeq 0$ represent the covariance matrix of a set of random variables. In certain applications,  the covariance matrix has many near-zero entries as the correlations are small for many pairs of random variables.  Such matrices arise in applications in spectrum estimation, biology and finance~\cite{el2008operator,chen2015exact}. We are interested in analyzing estimator \eqref{eq:conv_quad}, where $f(\Xb)=\|\Xb\|_{\ell_1}$ promotes the sparsity in the optimization. As $\|\cdot\|_{\ell_1}$ satisfies Assumption \ref{asm:2}, applying the result of Corollary \ref{cor:quad}, the perfect recovery in \eqref{eq:conv_quad} is equivalent to the perfect recovery in the estimator \eqref{eq:conv_quad_aux}, with the same penalty function. Analyzing the optimization~\eqref{eq:conv_quad_aux} via CGMT leads to the following result:
\begin{cor}\label{cor:sparse_cov}
	Let $\delta:=\frac{m}{n^2}$, $s:=\frac{\|\Xn\|_0}{n^2}$. As $n\rightarrow \infty$, the optimization program~\eqref{eq:conv_quad}, with $f(\Xb)=\|\Xb\|_{\ell_1}$ can successfully recover the signal iff $\delta > \delta^\star$, where $\delta^\star$ is the unique solution to the following nonlinear equation,
	\begin{align}\label{eq_delta}
		x\cdot Q^{-1}\l( \frac{2x-s}{2-2s} \r)=(1-s)\phi\l( Q^{-1}\l( \frac{2x-s}{2-2s} \r) \r)~,
	\end{align}
\end{cor}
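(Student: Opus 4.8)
The plan is to reduce the problem, via Corollary~\ref{cor:quad}, to the equivalent estimator~\eqref{eq:conv_quad_aux} in which the measurement matrices are $\Hb_i + \mathbf I$ with $\Hb_i$ Gaussian Wigner, and then apply the Gaussian phase-transition characterization of Corollary~\ref{cor:general} with $f(\cdot) = \|\cdot\|_{\ell_1}$ acting on the vectorized matrix $\xtb_0 = \vecc(\Xn) \in \RR^{n^2}$. Concretely, I would first verify that the vectors $\{\ab_i\}$ (standard Gaussian entries being the canonical case) satisfy Assumption~\ref{asm:3}, so that $\{\tilde{\mathbf a}_i = \vecc(\ab_i\ab_i^\t)\}$ satisfy Assumption~\ref{asm:1} with the mean $\mu = \vecc(\mathbf I)$ and covariance $\Sigb$ equal to the covariance of $\vecc(\ab_i\ab_i^\t)$; a short computation shows this covariance is exactly the Wigner covariance $2\,\Pi_{\mathrm{sym}}$ on symmetric matrices (unit variance off-diagonal, variance $2$ on the diagonal), which is why the $\Hb_i + \mathbf I$ form appears. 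Then $\|\cdot\|_{\ell_1}$ is separable and satisfies Assumption~\ref{asm:2} (its pieces $|x|$ are smooth away from $0$ with vanishing third derivative), so Theorem~\ref{thm:1} and Corollary~\ref{cor:general} both apply.

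Next I would plug into the "pseudo-Gaussian width" formula~\eqref{eq:gaussian_width}. Writing $\Sigb = \Mb\Mb^\t$ for the Wigner covariance, the denominator term $\tfrac1n(\wb^\t\mu)^2$ corresponds to $\tfrac1{n}\prns*{\tr(\Wb)}^2$ where $\Wb$ is the matrix reshaping of $\wb$; since the descent cone of $\|\cdot\|_{\ell_1}$ at a sparse point is a union of polyhedral cones and the relevant $\wb$ have $\|\tfrac1{\sqrt n}\Mb^\t\wb\|=1$ (i.e.\ Frobenius-type normalization of order $n$ in the $n^2$-dimensional problem), the mean contribution is lower order and drops out in the $n\to\infty$ limit. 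The problem thus reduces to computing the ordinary Gaussian width of the descent cone of the $\ell_1$-norm at an $s n^2$-sparse vector in $\RR^{n^2}$ — but measured against a Gaussian vector with the Wigner covariance rather than the identity. Because that covariance only rescales the diagonal by $\sqrt2$ (a vanishing fraction of coordinates), the width is asymptotically the same as the standard $\ell_1$ Gaussian width. Invoking the classical formula (Amelunxen–Lotz–McCoy–Tropp / Donoho–Tanner / Stojnic), the statistical dimension of $\mathcal D_{\|\cdot\|_{\ell_1}}(\xtb_0)$ normalized by $n^2$ is the function
\begin{align}
	\psi(s) = \min_{\tau \geq 0}\; s(1+\tau^2) + (1-s)\cdot\frac{2}{\sqrt{2\pi}}\int_\tau^\infty (u-\tau)^2 e^{-u^2/2}\,du,
\end{align}
and $\delta^\star = \psi(s)$. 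Carrying out the inner minimization: differentiating in $\tau$ gives the stationarity condition $s\tau = (1-s)\int_\tau^\infty (u-\tau)e^{-u^2/2}\,du \cdot \tfrac{2}{\sqrt{2\pi}}$, and a standard manipulation of Gaussian integrals rewrites this as $\tau = Q^{-1}\!\prns*{\tfrac{2x-s}{2-2s}}$ with $x = \delta^\star$, while evaluating the objective at the optimum and simplifying with the same integral identities collapses the expression into~\eqref{eq_delta}, namely $x\cdot Q^{-1}\!\prns*{\tfrac{2x-s}{2-2s}} = (1-s)\phi\!\prns*{Q^{-1}\!\prns*{\tfrac{2x-s}{2-2s}}}$. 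Uniqueness of the solution follows from monotonicity of $x \mapsto \psi(s)$-stationary value, equivalently convexity of the statistical-dimension functional.

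The main obstacle I anticipate is not the abstract reduction — that is essentially mechanical given Theorem~\ref{thm:1} and Corollary~\ref{cor:general} — but rather \emph{justifying that the Wigner covariance and the nonzero mean $\vecc(\mathbf I)$ do not perturb the phase transition}. One must argue carefully that (i) the mean term $\tfrac1n(\wb^\t\mu)^2$ in the denominator of~\eqref{eq:gaussian_width} is $o(1)$ uniformly over the constraint set, using that descent-cone directions for the $\ell_1$ norm at a nondegenerate sparse point cannot concentrate on the trace, and (ii) replacing the anisotropic Gaussian vector $\gb \sim \mathcal N(\mathbf 0, \Sigb)$ by an isotropic one changes the optimal value only by $o(1)$, since $\Sigb$ differs from (a multiple of) the identity only on the $n$ diagonal coordinates out of $n^2$. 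Both points are plausible but require a genuine concentration/perturbation argument rather than a one-line appeal. Once these are in hand, the remaining work — setting up the polar/statistical-dimension formula for the $\ell_1$ descent cone and reducing the scalar optimization to~\eqref{eq_delta} — is the standard Gaussian-width computation and proceeds by the routine Gaussian-integral identities sketched above.
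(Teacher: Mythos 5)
Your overall route --- reduce to \eqref{eq:conv_quad_aux} via Corollary~\ref{cor:quad}, then apply Corollary~\ref{cor:general} and compute the Gaussian width of the $\ell_1$ descent cone --- is the same reduction the paper intends (the paper itself only sketches this step). However, your execution of the width computation contains a genuine error that prevents it from producing equation \eqref{eq_delta}. You set $\delta^\star=\psi(s)$ with $\psi(s)=\min_{\tau\ge 0}\, s(1+\tau^2)+(1-s)\,\Exp[(|g|-\tau)_+^2]$, i.e.\ you treat the problem as standard $\ell_1$ recovery of an $sn^2$-sparse vector in ambient dimension $n^2$ with isotropic measurements. Carrying out the stationarity computation honestly under that normalization, the optimum value is $\psi(s)=s+2(1-s)Q(\tau^\star)$, so you would arrive at $x\cdot Q^{-1}\!\left(\tfrac{x-s}{2-2s}\right)=2(1-s)\,\phi\!\left(Q^{-1}\!\left(\tfrac{x-s}{2-2s}\right)\right)$, which is \emph{not} \eqref{eq_delta}. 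The stated equation corresponds to $\delta^\star=\psi(s)/2$, i.e.\ $x=\tfrac{s}{2}+(1-s)Q(\tau^\star)$; your claimed ``collapse'' into \eqref{eq_delta} does not happen.

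The missing factor of two is not a bookkeeping slip but the content of the step you dismissed as a perturbation. The covariance of $\vecc(\ab_i\ab_i^\t)$ is not ``the identity except on the $n$ diagonal coordinates'': the $(j,k)$ and $(k,j)$ entries of $\ab_i\ab_i^\t$ are identical, so $\Sigb$ has rank $n(n+1)/2$ and is supported entirely on (vectorized) symmetric matrices. Equivalently, the functionals $\tr\!\left(\ab_i\ab_i^\t\Xb\right)$, and likewise $\tr\!\left((\Hb_i+\mathbf I)\Xb\right)$, only see the symmetric part of $\Xb$, and a Wigner matrix is an isotropic Gaussian on the $n(n+1)/2$-dimensional space of symmetric matrices (in the inner product where $\|\Xb\|_F$ is the Euclidean norm, under which $\|\cdot\|_{\ell_1}$ becomes an asymptotically uniformly weighted $\ell_1$ norm). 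The correct effective ambient dimension is therefore $\approx n^2/2$ while $m=\delta n^2$ and $\|\Xn\|_0=sn^2$ still count all $n^2$ entries, and redoing the width calculation in that space yields exactly $\delta^\star=\psi(s)/2$ and hence \eqref{eq_delta}. Your item (ii) needs to be replaced by this dimension-halving argument rather than an ``$\Sigb\approx\mathbf I$'' perturbation bound; the constraint $\frac{1}{\sqrt n}\Mb^\t\wb\in S_{n-1}$ in \eqref{eq:gaussian_width} is precisely what restricts the maximization to symmetric directions and enforces this. Your item (i) concerning the mean term $\frac1n(\wb^\t\mu)^2=\frac1n\tr(\Wb)^2$ is a legitimate remaining technical point (it is not uniformly $o(1)$ over the sphere, only over the relevant descent directions), but the dimension counting is the step that actually determines the constants in \eqref{eq_delta}.
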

where $\phi(x)=\exp(-x^2/2)/\sqrt{2\pi}$ and $Q^{-1}(\cdot)$ is inverse of the Q-function. 

Figure~\ref{fig:fig3} compares the empirical result with the theoretical phase transition derived from Corollary~\ref{cor:sparse_cov} Each plot  shows the norm of the error with respect to the sparsity of the matrix $\Xn$ and the ratio $\delta=\frac{m}{n^2}$. A comparison between the two plots indicates that the  phase transitions of the two estimators~\eqref{eq:conv_quad} and~\eqref{eq:conv_quad_aux} with $f(\Xb)=\|\Xb\|_{\ell_1}$ match.
\begin{figure*}[h]
	\centering
	\begin{subfigure}[b]{0.5\textwidth}
		\centering
		\includegraphics[width=\textwidth]{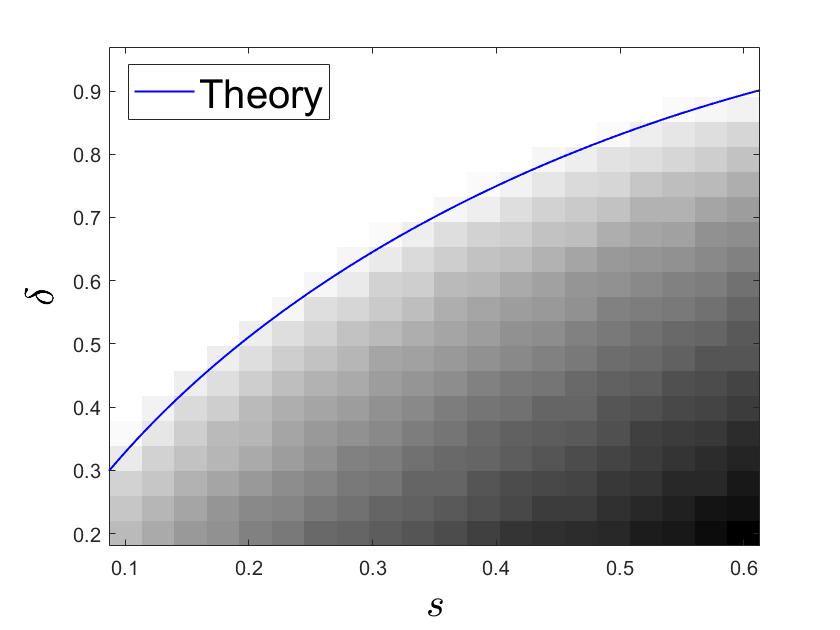}\label{fig:fig1}
		
		\caption{\small{ ~}}
	\end{subfigure}%
	~ 
	\begin{subfigure}[b]{0.5\textwidth}
		\centering
		\includegraphics[width=\textwidth]{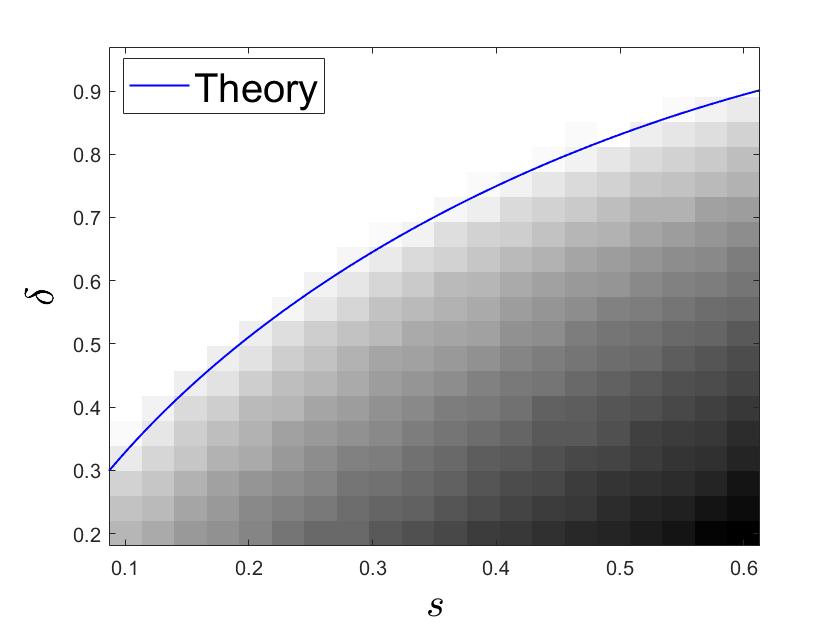}
		\caption{\small{ ~}}
		\label{fig:fig3}
	\end{subfigure}
	\caption{\small{Phase transition regimes for both estimators \eqref{eq:conv_quad} and \eqref{eq:conv_quad_aux}, with $f(\Xb)=\|\Xb\|_{\ell_1}$, in terms of the oversampling ratio $\delta=\frac{m}{n}$ and $s=\frac{\|\Xn\|_0}{n^2}$, for the cases of (a) estimator \eqref{eq:conv_quad} with quadratic measurements and (b) estimator \eqref{eq:conv_quad_aux} with Gaussian measurements. The blue lines indicate the theoretical estimate for the phase transition derived from equation \eqref{eq_delta}. In the simulations we used matrices of size $n=40$. The data is averaged over 20 independent realization of the measurements.  }}\label{fig:sparse_mat}
\end{figure*}
\begin{table}\label{tab_meas}
	\begin{center}
		\begin{tabular}{ l| l | l }
			Model 		&Penalty function $f(\cdot)$	& No. of required measurements 	 \\
			
			\hline $k$ sparse matrix 	& $\|\cdot\|_{\ell_1}$	& $n^2 \delta^\star$ defined in \eqref{eq_delta}	 \\
			
			\hline
			Rank-$r$ PSD matrix &$\tr(\cdot)$& 3nr 	 \\
			
			\hline
			S\&L $(k,r)$ matrix&	 $\tr(\cdot)+\la \|\cdot\|_1$& $\mathcal O(\min (k^2,rn))$ 	 	 \\
			
			\hline
		\end{tabular}
	\end{center}
	\caption{Summary of the parameters that are discussed in this section. The last row is for a $n\times n$ rank-$r$ matrix whose smallest sub-matrix with non-zero entries is $k$ by $k$. The third column shows the number of required quadratic measurements for perfect recovery.} 
	\label{tab:sum}
\end{table}
\subsection{Conclusion}
We have investigated an estimation problem under linear observations. We aimed to characterize the minimum number of observations that are needed for perfect recovery of the unknown model. Our main result indicated that this phase transition, only depends on the first two statistics of the measurement vector. Therefore, it remains unchanged as we replace these vectors with the Gaussian one, with the same mean vector and covariance matrix. The later can be analyzed through existing frameworks such as CGMT. As one of the applications of this universality, we investigated the case of matrix recovery via the so called quadratic measurements, and derived the minimum number of observations required for the recovery of a structured matrix. Due to the space constraint, we moved the discussions regarding the case of simultaneously structured matrices to the appendix. Table \ref{tab:sum}, summarizes these results for the cases of three structures.

\newpage
\bibliography{main}
\bibliographystyle{plain}
\newpage
\section{Simultaneously Sparse and Low-rank Matrices}\vspace{-2mm}
Another interesting example is where the unknown matrix $\Xn\succeq 0$ is simultaneously sparse and low rank. To recover $\Xn$, we would like to simultaneously minimize the penalty functions $f^{(1)}(\Xb)=\|\Xb\|_{\ell_1}$ and $f^{(2)}(\Xb)=\|\Xb\|_{\star}$, for all feasible matrices $\Xb\in\mathcal S$ that align our measurements in \eqref{eq:quad_meas}. Here, each function $f^{(i)}(\cdot)$ enforces one of the structures on $\Xb$. So, a natural choice for the regularizer function in \eqref{eq:conv_quad} would be $f(\Xb)=f^{(1)}(\Xb)+\lambda f^{(2)}(\Xb)$, where $\lambda$ is a regularizing parameter. Oymak et al \cite{oymak2015simultaneously} studied phase transition for perfect recovery of simultaneously structured matrices. Their results are based on Gordon's comparison lemma which is only applicable to the cases of linear Gaussian measurements. We can use the result of Corollary \ref{cor:quad} to extend their result to  settings with quadratic measurements, as the phase transition regime is equivalent in both cases. Let $\Xn\in\RR^{n\times n}$ be a rank-$r$ PSD matrix. Also assume that the largest sub-matrix in $\Xn$ that contains all non-zero entries is $k$ by $k$. If we choose $f(\Xb)=\|\Xb\|_{\ell_1}+\la \tr(\Xb)$, they show that $\mathcal O(\min (k^2,rn))$ measurements is required for perfect recovery. \vspace{-3mm}
\section{Proofs}
\subsection{Proof of Theorem 1}
Consider the following optimization
\begin{align}\label{eq:opt1}
	\Phi_1=\min_{\Ab\xn=\Ab\xb}~f(\xb)~,
\end{align}
Without loss of generality, assume that $f(0)=0$. We change the variable to $\wb=\xb-\xn$, which gives the following
\begin{align}\label{eq:opt2}
	\Phi_1=\min_{\Ab\wb=0}~f(\wb+\xn)~,
\end{align}
This optimization has perfect recovery, iff $\hat{\wb}=0$, or equivalently iff $\Phi_1=0$. We would like to show that if $\Phi_1=0$ with probability converging to 1, then the same holds if we replace the measurements vectors $\ab_i$, with another set of measurement vectors with the same mean and covariance. We rewrite this optimization in the form of this min-max optimization,
\begin{align}\label{eq:opt3}
	\Phi_1&=\sup_{\lambda>0}~\min_{\wb}~\frac{\lambda}{2}\|\Ab\wb\|^2+f(\wb+\xn)\nonumber\\
	&=\sup_{\lambda>0}~\min_{\mu>0}~\min_{\wb}~\frac{\lambda}{2}\|\Ab\wb\|^2+f(\wb+\xn)+\frac{1}{2\mu}\|\wb\|^2\nonumber\\
	&=\sup_{\lambda>0}~\lambda\cdot\min_{\mu>0}~\min_{\wb}~\frac{1}{2}\|\Ab\wb\|^2+\frac{1}{\lambda}f(\wb+\xn)+\frac{1}{2\lambda\mu}\|\wb\|^2
\end{align}
Informally, we first show that for fixed values of $\lambda$ and $\mu$, the values of last minimization remains unchanged as we change the random measurement vectors inside it (as $m$ and $n$ grow to infinity). Next, we use Lemma \ref{lem:inf_conv} (See \cite{thrampoulidis2018precise} Section A.4 and B.5) to switch the min-max over $\mu$ and $\lambda$, with the limit over $m$ and $n$.\\
By fixing the values of  $\lambda$ and $\mu$, from now on, we redefine the function $f(\cdot)$ to be $\frac{1}{\lambda}f(\wb+\xn)+\frac{1}{2\lambda\mu}\|\wb\|^2$, which is strongly convex. Note that we would like the following assumptions holds for these two set of random measurement vectors.\\
\textbf{Assumption 1:} Assume $\Ab=[\ab_1,\dots,\ab_m]^\t\in\RR^{m\times n}$ and $\Bb=[\bb_1,\dots,\bb_m]^\t\in\RR^{m\times n}$ are two random matrices, such that\\
\begin{align}
	&\eb=\Exp\l[ \ab_i \r]=\Exp\l[\bb_i\r]\quad\forall i\nonumber\\
	&\mathbf \Sigma=\Exp\l[\ab_i\ab_i^\t\r]=\Exp\l[\bb_i\bb_i^\t\r]\quad\forall i\nonumber\\
	&\lim_{n\rightarrow\infty}\frac{\|\eb\|^2}{n^2}=0,
\end{align}
Besides, there exists $\tau>0$ such that for any matrix $\Mb\in\RR^{n\times n} $ such that $\|\Mb\|_2\leq\kappa$, there exists some $c$ that only depends on $\kappa$ that
\begin{align}
	&\frac{1}{n^2}\text{Var}\l( \ab_i^\t\Mb\ab_i \r)\leq c\cdot n^{-\tau}\quad\text{and,}\nonumber\\
	&\frac{1}{n^2}\text{Var}\l( \bb_i^\t\Mb\bb_i \r)\leq c\cdot n^{-\tau}~.
\end{align}
Now we want to investigate equivalence of the following two optimizations.
Let $\Ab=[\ab_1,\dots,\ab_m]$ and $\Bb=[\bb_1,\dots,\bb_m]$ be $m$ by $n$ measurement matrices and
\begin{align}\label{eq:initial_prob}
	&\Phi_{\Bb}=\min\limits_{\wb}~ \frac{1}{2m}\sum\limits_{i=1}^m \l(z_i-\wb^\t\ab_i\r)^2+ f\l( \wb +\xn\r)~,\nonumber\\
	&\Phi_{\Ab}=\min\limits_{\wb}~ \frac{1}{2m}\sum\limits_{i=1}^m \l(z_i-\wb^\t\bb_i\r)^2+ f\l( \wb +\xn\r)~.
\end{align}

\begin{thm}
	Consider the optimizations in \eqref{eq:initial_prob}. If
	\begin{align}
		\lim_{n,m\rightarrow\infty}\l|\Exp\l[ \Phi_{\Bb}-\Phi_{\Ab} \r]\r|=0~,
	\end{align}
	and if for constants $C$ and $\delta>0$,
	\begin{align}
		&\pr\l( |\Phi_{\Ab}-C|>\delta \r)\xrightarrow{\text{P}}0~,
	\end{align}
	as $n,m\rightarrow\infty$. Then,
	\begin{align}
		&\pr\l( |\Phi_{\Bb}-C|>3\delta \r)\xrightarrow{\text{P}}0~,
	\end{align}
\end{thm}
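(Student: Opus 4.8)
Write $E_\Ab:=\Exp[\Phi_\Ab]$ and $E_\Bb:=\Exp[\Phi_\Bb]$. Beyond the two stated hypotheses the argument uses one further ingredient, namely that each optimal value concentrates about its own mean: for every fixed $\eps>0$,
\begin{align}\label{eq:conc-plan}
	\pr\l(|\Phi_\Ab-E_\Ab|>\eps\r)\longrightarrow0\quad\text{and}\quad\pr\l(|\Phi_\Bb-E_\Bb|>\eps\r)\longrightarrow0
\end{align}
as $n,m\to\infty$. I would establish \eqref{eq:conc-plan} first, as it is the technical core. After the regularization built into \eqref{eq:opt3}, the objective minimized in \eqref{eq:initial_prob} is strongly convex in $\wb$, so with probability tending to one its (unique) minimizer lies in a Euclidean ball of a fixed radius $R_0$ (controlled by $\lambda,\mu,\xn$ together with the fact that $\tfrac1m\sum_i z_i^2$ is bounded in probability). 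Restricted to that ball, and on the high-probability event $\{\max_i\|\ab_i\|_2\lesssim\sqrt n\}$ afforded by the sub-exponential tails, the objective depends smoothly on the measurement rows with per-row sensitivities that the variance control $\tfrac1{n^2}\text{Var}(\ab_i^\t\Mb\ab_i)\le c\,n^{-\tau}$ of Assumption~1 renders negligible in an Efron--Stein (coordinate-resampling) variance estimate; this gives $\text{Var}(\Phi_\Ab)\to0$, hence \eqref{eq:conc-plan} for $\Phi_\Ab$ by Chebyshev, and identically for $\Phi_\Bb$, once the vanishing truncation error is absorbed.

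Granting \eqref{eq:conc-plan}, the remainder is bookkeeping. \emph{From probability to expectation.} Combining $\pr(|\Phi_\Ab-C|>\delta)\to0$ with \eqref{eq:conc-plan}: for each $\eps>0$ the events $\{|\Phi_\Ab-C|\le\delta\}$ and $\{|\Phi_\Ab-E_\Ab|\le\eps\}$ each have probability tending to one, so they intersect for all large $n$, on which the deterministic inequality $|E_\Ab-C|\le\delta+\eps$ holds; letting $\eps\downarrow0$ yields $\limsup_n|E_\Ab-C|\le\delta$. (Equivalently, since $\inf_\wb f(\wb+\xn)\le\Phi_\Ab\le\tfrac1{2m}\sum_i z_i^2+f(\xn)$ with a concentrating upper bound, a truncation argument converts convergence in probability of $\Phi_\Ab$ into convergence of its mean.)

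\emph{Transfer.} By the triangle inequality and the hypothesis $|\Exp[\Phi_\Bb-\Phi_\Ab]|\to0$,
\begin{align}
	|E_\Bb-C|\;\le\;|E_\Bb-E_\Ab|+|E_\Ab-C|\;\le\;o(1)+\bigl(\delta+o(1)\bigr),
\end{align}
so $|E_\Bb-C|<2\delta$ for all large $n$. On the event $\{|\Phi_\Bb-E_\Bb|\le\delta\}$, which has probability tending to one by \eqref{eq:conc-plan}, we then get $|\Phi_\Bb-C|\le|\Phi_\Bb-E_\Bb|+|E_\Bb-C|<3\delta$; hence $\pr(|\Phi_\Bb-C|>3\delta)\le\pr(|\Phi_\Bb-E_\Bb|>\delta)\to0$, which is the claim. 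The extra slack (i.e.\ $3\delta$ rather than $2\delta$) is there only to absorb the two $o(1)$ terms.

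\textbf{Main difficulty.} The only non-routine step is \eqref{eq:conc-plan} --- concentration of the optimal value around its mean, uniformly in $n$ and $m$, when the $\ab_i$ are merely sub-exponential rather than bounded or Gaussian. Strong convexity of the regularized objective makes this tractable by localizing the minimizer, but one still must control the coordinate-wise sensitivity of the quadratic data term on the localizing ball and pay for truncating the heavy tails; the variance hypotheses of Assumption~1 are calibrated exactly for this. Everything downstream of \eqref{eq:conc-plan} is elementary.
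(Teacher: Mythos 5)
Your argument hinges on an ingredient that is neither among the theorem's hypotheses nor established anywhere in the paper: the claim that $\Phi_{\Ab}$ and $\Phi_{\Bb}$ each concentrate about their own means. Granting that claim, your bookkeeping (pass from $\pr\{|\Phi_{\Ab}-C|>\delta\}\to 0$ to $|\Exp[\Phi_{\Ab}]-C|\le\delta+o(1)$, transfer to $\Exp[\Phi_{\Bb}]$ via the first hypothesis, then re-concentrate around $\Exp[\Phi_{\Bb}]$) is correct, and the extra slack in $3\delta$ does absorb the error terms. But concentration of $\Phi_{\Bb}$ about its mean, for a general ensemble satisfying only Assumption 1, is essentially the conclusion that the whole universality machinery is designed to avoid proving directly: if you could show $\mathrm{Var}(\Phi_{\Bb})\to 0$ by an Efron--Stein argument for arbitrary sub-exponential rows, you would already hold the hard half of the universality statement, and this theorem would reduce to identifying the limiting constant. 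Your sketch of that step (localization by strong convexity, tail truncation, per-row sensitivity estimates) is plausible but is at least as much work as the Lindeberg swapping argument carried out in the next theorem, and none of it is executed here; as written, the proof has a genuine gap at its self-declared ``technical core.''

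The paper's proof avoids self-concentration entirely. It introduces a $C^1$, piecewise-quadratic surrogate $g$ for the indicator of $\{|x|\ge 3\}$, with $g\equiv 0$ on $[-1,1]$, $g\equiv 2$ on $|x|\ge 3$, and $|g'|\le 2$, and writes $\pr\{|\Phi_{\Bb}-C|>3\delta\}\le \tfrac{1}{2}\Exp\l[g\l((\Phi_{\Bb}-C)/\delta\r)\r]$ by Markov's inequality; it then compares this to $\Exp\l[g\l((\Phi_{\Ab}-C)/\delta\r)\r]\le 2\,\pr\{|\Phi_{\Ab}-C|>\delta\}$ and controls the difference of the two expectations using the Lipschitz bound on $g$ together with the first hypothesis. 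This transfers the concentration of $\Phi_{\Ab}$ to $\Phi_{\Bb}$ directly, with no appeal to the variance of either optimal value. (Strictly, that Lipschitz step consumes $\Exp[|\Phi_{\Ab}-\Phi_{\Bb}|]$, or else closeness of $\Exp[g(\cdot)]$ under the two ensembles for the smooth test function $g$ itself, which is the form in which a Lindeberg estimate is normally invoked; but in either form the mechanism is a smoothed Markov inequality, not concentration about the mean.) To repair your route you must either prove your concentration claim in full under Assumption 1, or switch to this test-function argument.
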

\begin{proof}
	We first define the function $g:\RR\rightarrow\RR$ as follows.
	\begin{align}
		g(x)=\begin{cases}
			0&\quad \text{if  }\quad|x|\leq 1,\\
			(|x|-1)^2&\quad\text{if  }\quad1<|x|\leq 2,\\
			2-(|x|-3)^2&\quad\text{if  }\quad2<|x|\leq 3,\\
			2&\quad \text{if  }\quad|x|> 3~.
		\end{cases}
	\end{align}
	Note that $g(.)$ is continuously differentiable with its first derivative bounded by $2$. Now,
	\begin{align}
		\pr\l\{|\Phi_{\Bb}-C|>3\delta\r\}&=\pr\l\{g\l(\frac{\Phi_{\Bb}-C}{\delta}\r)>2\r\}\leq\frac{1}{2}\Exp\l[ g\l(\frac{\Phi_{\Bb}-C}{\delta}\r) \r]\nonumber\\
		&\leq \frac{1}{2}\Exp\l[ g\l(\frac{\Phi_{\Ab}-C}{\delta}\r) \r]+\frac{1}{2}\l|\Exp\l[ g\l(\frac{\Phi_{\Ab}-C}{\delta}\r) -g\l(\frac{\Phi_{\Bb}-C}{\delta}\r)\r]\r|\nonumber\\
		&\leq \pr\l\{|\Phi_{\Ab}-C|>\delta\r\}+ \frac{1}{2}\l|\Exp\l[ g'(\zeta)\cdot \l( \frac{\Phi_{\Ab}-C}{\delta}-\frac{\Phi_{\Bb}-C}{\delta} \r)\r]\r|\nonumber\\
		&\leq \pr\l\{|\Phi_{\Ab}-C|>\delta\r\}+ \frac{1}{\delta}\l|\Exp\l[\Phi_{\Ab}-\Phi_{\Bb}\r]\r|\xrightarrow{n,m\rightarrow\infty}0
	\end{align}
\end{proof}

\begin{thm}
	Consider the optimizations in \eqref{eq:initial_prob}. If $\Ab$, $\Bb$ and $f(.)$ satisfy Assumption \ref{asm:1} and \ref{asm:2}, respectively, then
	\begin{align}
		\lim_{n,m\rightarrow\infty}\l|\Exp\l[ \Phi_{\Ab} \r]-\Exp\l[ \Phi_{\Bb} \r]\r|\rightarrow 0~.
	\end{align}
\end{thm}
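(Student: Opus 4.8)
The plan is to prove this by a Lindeberg-type swapping argument, exchanging the rows of the measurement matrix one at a time. Set $\mathbf C_0=\Ab$ and, for $k=1,\dots,m$, let $\mathbf C_k$ be the matrix whose first $k$ rows are $\bb_1,\dots,\bb_k$ and whose remaining rows are $\ab_{k+1},\dots,\ab_m$, so that $\mathbf C_m=\Bb$ and consecutive matrices $\mathbf C_{k-1},\mathbf C_k$ differ only in the $k$-th row. Writing $\Phi_{\mathbf C}$ for the optimal value of \eqref{eq:initial_prob} with measurement matrix $\mathbf C$, the telescoping identity $\Exp[\Phi_{\Ab}]-\Exp[\Phi_{\Bb}]=\sum_{k=1}^m\bigl(\Exp[\Phi_{\mathbf C_{k-1}}]-\Exp[\Phi_{\mathbf C_k}]\bigr)$ reduces the statement to showing that each summand is $o(1/m)$, uniformly in $k$.

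To bound a single summand I would fix $k$ and condition on all rows other than the $k$-th. Because $\mathbf C_{k-1}$ and $\mathbf C_k$ share those rows, they also share the leave-one-out objective $H(\wb)$ (the $j\neq k$ data-fidelity terms plus $f(\wb+\xn)$); after the reduction that produced \eqref{eq:initial_prob} the penalty $f$ carries an added quadratic term, so $H$ is $\rho$-strongly convex with a constant $\rho>0$, its minimizer $\wb_0$ is unique, lies in a fixed bounded ball $\mathcal W$, and is independent of $\ab_k$ and $\bb_k$; moreover $\|\nabla^2 H^{-1}\|_2\le\rho^{-1}$. Let $\Phi(\vb)$ be the optimal value when row $k$ equals $\vb$. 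Since the added term $g_\vb(\wb):=\tfrac1{2m}(z_k-\wb^\t\vb)^2$ is convex, I would expand the value about $\wb_0$:
\begin{equation}
\Phi(\vb)=\Phi_{-k}+\frac{1}{2m}\bigl(z_k-\wb_0^\t\vb\bigr)^2+E(\vb),
\end{equation}
where $\Phi_{-k}=\min H$ and the remainder $E(\vb)$ is controlled by how far the minimizer moves; using the optimality conditions and $\nabla^2 H\succeq\rho\mathbf I$, that displacement, and hence $E(\vb)$, can be written in terms of the scalar residual $z_k-\wb_0^\t\vb$ and quadratic forms $\vb^\t\Mb\vb$ with $\|\Mb\|_2$ bounded on $\mathcal W$ (of the type $\vb^\t\nabla^2 H^{-1}\vb$), with an overall $1/m$ prefactor from $g_\vb$.

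The next step is the two-moment cancellation. The leading term $\tfrac1{2m}(z_k-\wb_0^\t\vb)^2$ is a quadratic form in $\vb$ whose coefficients $z_k,\wb_0$ do not depend on $\vb$, so its conditional expectation depends on the law of $\vb$ only through $\Exp[\vb]=\eb$ and $\Exp[\vb\vb^\t]=\Sigb$; the same is true, to the order that matters, of the bounded-shape quadratic-form pieces in $E(\vb)$, since $\Exp[\vb^\t\Mb\vb\mid\text{rest}]=\tr(\Mb\Sigb)+\eb^\t\Mb\eb$. Because $\ab_k$ and $\bb_k$ share the same mean and second moment, all of these contributions cancel in $\Exp[\Phi_{\mathbf C_{k-1}}]-\Exp[\Phi_{\mathbf C_k}]$, and what survives is driven by the \emph{fluctuations} of the quadratic forms and by genuinely higher-order remainders. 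Here Assumption~\ref{asm:1} does the work: the sub-exponential tails (Assumption~\ref{asm:1}.1) supply the moment and concentration estimates for $z_k-\wb_0^\t\vb$ and $\|\vb\|^2$; the bounded-mean condition (Assumption~\ref{asm:1}.2) makes the $\eb$-dependent parts lower order; and the bounded-power condition (Assumption~\ref{asm:1}.3), i.e. $n^{-2}\text{Var}(\ab_k^\t\Mb\ab_k)\le c\,n^{-\tau}$ for $\|\Mb\|_2\le\kappa$, is exactly the bound needed to show the surviving fluctuation terms are $o(1/m)$ after the $1/m$ normalization in \eqref{eq:initial_prob}. Assembling these, summing the $m$ per-row estimates, and letting $m,n\to\infty$ at the fixed rate $m=\theta(n)$ gives $\bigl|\Exp[\Phi_{\Ab}]-\Exp[\Phi_{\Bb}]\bigr|\to0$.

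The hard part will be this last accounting: obtaining an $o(1/m)$ bound per row rather than merely $O(1/m)$, given that the measurement vectors are \emph{not} bounded — $\|\vb\|^2$ concentrates around $\tr(\Sigb)$, which grows with the ambient dimension (of order $n^2$ in the quadratic-measurement instantiation of Section~\ref{sec:quad}). A naïve third-order Taylor expansion of $\Phi(\vb)$ in the ambient coordinates is hopelessly lossy here; the crux is to verify that every appearance of $\vb$ in $E(\vb)$ is channelled through the residual $z_k-\wb_0^\t\vb$ and through quadratic forms $\vb^\t\Mb\vb$ with $\|\Mb\|_2=O(1)$ (so that Assumption~\ref{asm:1} is applicable) rather than through the $\ell_1$-geometry of $\vb$, and to track the quantitative exponents $\tau_1,\tau_2$ carefully enough that the sum over $m=\theta(n)$ terms still vanishes. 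One also needs a uniform-in-$\mathcal W$ version of these estimates — obtained by pairing strong convexity with a covering/continuity argument — so that the conditioning on the other $m-1$ rows can be carried out without losing control of the constants.
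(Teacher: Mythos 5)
Your proposal follows essentially the same route as the paper's proof: a Lindeberg row-by-row swap with telescoping, conditioning on the leave-one-out problem, a per-row expansion whose leading term is a quadratic form in the swapped row (hence matched by the shared mean and covariance), and control of the surviving fluctuations via the quadratic-form variance condition of Assumption~\ref{asm:1}.3. The paper implements your remainder bound $E(\vb)$ concretely by replacing $f$ with its second-order Taylor surrogate about the leave-one-out minimizer --- which makes the per-row increment exactly $\frac{(z-\bb^\t\bar{\wb})^2}{2m(1+\bb^\t\mathbf\Omega^{-1}\bb)}$ --- and by bounding the substitution error through the third-derivative condition of Assumption~\ref{asm:2} in the $\ell_3$-norm of the minimizer's displacement, which is exactly the ``channelling through the residual and bounded quadratic forms'' you anticipated.
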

\begin{proof}
	For $k=0,\dots,m$, we define
	\begin{align}
		\Phi_k:=\min\limits_{\wb}~ \frac{1}{2m}\sum\limits_{i=1}^k \l(z_i-\ab_i^\t\wb\r)^2+\frac{1}{2m}\sum\limits_{i=k+1}^m \l(z_i-\bb_i^\t\wb\r)^2+ f\l( \wb+\xn \r)~.
	\end{align}
	We have
	\begin{align}\label{eq:lind_all}
		\l|\Exp\l[\Phi_{\Ab}-\Phi_{\Bb} \r]\r|=\l|\Exp\l[ \Phi_m-\Phi_0 \r]\r|\leq \sum_{k=1}^m \l|\Exp\l[ \Phi_k-\Phi_{k-1} \r]\r|~.
	\end{align}
	Now it suffices to show that there exists a constant $c$, such that for any $k$,
	\begin{align}\label{eq:lind_one}
		\l|\Exp\l[  \Phi_k-\Phi_{k-1} \r]\r |\leq c~m^{-(1+\tau/2)}~,
	\end{align}
	for some positive constant $\tau$. Since, then combining \eqref{eq:lind_one} and \eqref{eq:lind_all} yields,
	\begin{align}
		\l|\Exp\l[ \Phi_{\Ab}-\Phi_{\Bb} \r]\r |\leq \sum_{k=1}^m \l|\Exp\l[ \Phi_k-\Phi_{k-1} \r]\r |\leq c~m^{-\tau/2}\rightarrow 0~.
	\end{align}
	Let
	\begin{align}
		&\Mb_k=[\ab_1,\dots,\ab_{k-1},\bb_{k+1},\dots,\bb_m]^\t\in\RR^{(m-1)\times n},\quad\text{and,}\nonumber\\
		&\zb_k=[z_1,\dots,z_{k-1},z_{k+1},\dots,z_m]^\t\in\RR^{m-1}~.
	\end{align}
	This helps us rewrite $\Phi_k$ and $\Phi_{k-1}$ as 
	\begin{align}
		&\Phi_k=\min\limits_{\wb}~ \frac{1}{2m}\|\zb_k-\Mb_k\wb\|^2+\frac{1}{2m}\l( z_k-\ab_k^\t\wb \r)^2+  f\l( \wb+\xn \r)~,\nonumber\\
		&\Phi_{k-1}=\min\limits_{\wb}~ \frac{1}{2m}\|\zb_k-\Mb_k\wb\|^2+\frac{1}{2m}\l( z_k-\bb_k^\t\wb \r)^2+  f\l( \wb+\xn \r)~.
	\end{align}
	As of this point, we fix $k$ and drop the subscript $k$ from $z_k$, $\zb_k$, $\Mb_k$, $\ab_k$ and $\bb_k$ for simplicity. The expectation in \eqref{eq:lind_one} is over the randomness in $z$, $\zb$, $\Mb$, $\ab$ and $\bb$, which can be written as
	\begin{align}\label{eq:two_exp}
		\l|\Exp\l[ \Phi_k-\Phi_{k-1} \r]\r|=\l|\Exp_{\{\Mb,\zb\}}\left[\Exp_{\{z,\ab,\bb\}}\l[ \Phi_k-\Phi_{k-1}\big|\{\Mb,\zb\}\r]\right]\r|\leq  \Exp_{\{\Mb,\zb\}}\left[\l|\Exp_{\{z,\ab,\bb\}\big|\{\Mb,\zb\}}\l[ \Phi_k-\Phi_{k-1}\r]\r|\right] ~.
	\end{align}
	We first fix $\Mb$ and $\zb$, and bound the inner expectation in \eqref{eq:two_exp}. Now let,
	\begin{align}
		&\phi(\ab,z,\wb)=\frac{1}{2m}\|\zb-\Mb\wb\|^2+\frac{1}{2m}\l( z-\ab^\t\wb \r)^2+  f\l( \wb+\xn \r),\nonumber\\
		&\Phi(\ab,z)=\min\limits_{\wb}~ \phi(\ab,\wb)~,\nonumber\\
		&\bar{\Phi}=\Phi(\mathbf 0,0),~\text{and,}\quad \bar{\wb}=\arg\min~\phi(\mathbf 0,0,\wb) ~.
	\end{align}
	With these new definitions, we have $\Phi_k=\Phi(\ab,z)$ and $\Phi_{k-1}=\Phi(\bb,z)$ and thus,
	\begin{align}
		\l|\Exp_{\{z,\ab,\bb\}}\l[ \Phi_k-\Phi_{k-1} \r]\r|&=\l|\Exp_{\{z,\ab,\bb\}}\l[ \Phi(\ab,z)-\Phi(\bb,z) \r]\r|\nonumber\\
		&\leq \l|\Exp_{\{z,\ab\}}\l[ \Phi(\ab,z)-\bar\Phi-\frac{\sigma^2+\frac{\|\wbb\|^2}{m}}{2m(1+\Exp[\bb^\t\mathbf{\Omega}\bb])} \r]\r|\nonumber\\
		&+\l|\Exp_{\{z,\bb\}}\l[ \Phi(\bb,z)-\bar\Phi-\frac{\sigma^2+\frac{\|\wbb\|^2}{m}}{2m(1+\Exp[\bb^\t\mathbf{\Omega}\bb])} \r]\r|
	\end{align}
	So since $\Exp[\bb^\t\mathbf{\Omega}\bb]=\Exp[\ab^\t\mathbf{\Omega}\ab]$, it remains to show that for positive constants $c$ and $\tau$, 
	\begin{align}
		&\l|\Exp_{\{z,\ab\}}\l[ \Phi(\ab,z)-\bar\Phi-\frac{\sigma^2+\frac{\|\wbb\|^2}{m}}{2m(1+\Exp[\ab^\t\mathbf{\Omega}\ab])} \r]\r|\leq  c~m^{-(1+\tau/2)}~,\quad\text{and,}\nonumber\\
		&\l|\Exp_{\{z,\bb\}}\l[ \Phi(\bb,z)-\bar\Phi-\frac{\sigma^2+\frac{\|\wbb\|^2}{m}}{2m(1+\Exp[\bb^\t\mathbf{\Omega}\bb])} \r]\r|\leq c~m^{-(1+\tau/2)}~.
	\end{align}
	We show the later, and the proof of the first is similar. Define $\vb=\frac{\partial  f(\bar{\wb}+\xn)}{\partial \wb}$ and $\Vb=\frac{\partial^2 f(\wbb+\xn)}{\partial \wb^2}$ and 
	\begin{align}
		&\psi(\bb,z,\wb)=\frac{1}{2m}\|\zb-\Mb\wb\|^2+\frac{1}{2m}\l( z-\bb^\t\wb \r)^2+ f \l( \wbb+\xn\r)+\vb^\t (\wb-\wbb)+\frac{1}{2}(\wb-\wbb)^\t\Vb(\
		\wb-\wbb)~,\nonumber\\
		&\Psi(\bb,z)=\min\limits_{\wb}~ \psi(\bb,z,\wb)~,\quad\text{and,}\quad \wbt=\arg\min~\psi(\bb,z,\wb)~.
	\end{align}
	Note that by writing the optimality conditions, it is easy to show that $\Psi(\mathbf 0,0)=\Phi(\mathbf 0,0)=\bar\Phi$. Thus,
	\begin{align}\label{eq:beark}
		\Exp_{\{z,\bb\}}\l|\l[ \Phi(\bb,z)-\bar\Phi-\frac{\sigma^2+\frac{\|\wbb\|^2}{m}}{2m(1+\Exp[\bb^\t\mathbf{\Omega}\bb])} \r]\r|&\leq \Exp_{\{z,\bb\}}\l[ |\Phi(\bb,z)-\Psi(\bb,z)| \r]\nonumber\\
		&+\l|\Exp_{\{z,\bb\}}\l[ \Psi(\bb,z)-\Psi(\mathbf 0,0)-\frac{\sigma^2+\frac{\|\wbb\|^2}{m}}{2m(1+\Exp[\bb^\t\mathbf{\Omega}\bb])} \r]\r|~.
	\end{align}
	So we have to bound the two terms on the right hand side of \eqref{eq:beark}. We start with bounding $\Exp_{\{z,\bb\}}\l[ |\Phi(\bb,z)-\Psi(\bb,z)| \r]$. Note that for any $\wb$ we have
	\begin{align}\label{eq:remainder}
		\l|\psi(\bb,z,\wb)-\phi(\bb,z,\wb)\r|\leq \frac{C_f}{m}\|\wb-\wbb\|_3^3~.
	\end{align}
	Besides, due to strong convexity of $\bar f(.)$ we have,
	\begin{align}\label{eq:strong}
		\l|\psi(\bb,z,\wb)-\Psi(\bb,z)\r|\geq \frac{\epsilon}{m}\|\wb-\wbt\|_2^2~.
	\end{align}
	We have two cases.\\
	First if $\|\wbt-\wbb\|_3\leq \frac{\epsilon}{9~C_f}$. Consider the set $\mathcal S=\{\wb~:~ \|\wb-\wbt\|_3=\|\wbt-\wbb\|_3\}$. For any $\wb$ in the set $\mathcal S$ we have
	\begin{align}\label{eq:case1}
		\phi(\bb,z,\wb)-\phi(\bb,z,\wbt)&\geq \psi(\bb,z,\wb)-\psi(\bb,z,\wbt)-\frac{C_f}{m}\l( \|\wb-\wbb\|_3^3+\|\wbt-\wbb\|_3^3 \r)\nonumber\\
		&\geq \frac{\epsilon}{m}\|\wb-\wbt\|_2^2-\frac{C_f}{m}\l( \|\wb-\wbb\|_3^3+\|\wbt-\wbb\|_3^3 \r)\nonumber\\
		&\geq \frac{\epsilon}{m}\|\wb-\wbt\|_3^2-\frac{C_f}{m}\l( 4~\|\wb-\wbb\|_3^3+5~\|\wbt-\wbb\|_3^3 \r)\nonumber\\
		&= \frac{9~C_f}{m}\|\wbt-\wbb\|_3^2 \l( \frac{\eps}{9~C_f}-\|\wbt-\wbb\|_3 \r)\geq 0~.
	\end{align}
	This means that the optimal value of $\phi(\bb,z,\wb)$ lies within $\mathcal S$. Now if $\wbphi=\arg\min \phi(\bb,z,\wb)$,
	\begin{align}\label{eq:case1_last1}
		\Psi(\bb,z)-\Phi(\bb,z)&=\l( \psi(\bb,z,\wbt)-\psi(\bb,z,\wbphi) \r)+\l( \psi(\bb,z,\wbphi)-\phi(\bb,z,\wbphi) \r) \nonumber\\
		&\leq \l( \psi(\bb,z,\wbphi)-\phi(\bb,z,\wbphi) \r)\leq \frac{C_f}{m}\|\wbphi-\wbb\|_3^3\nonumber\\
		&\leq \frac{4~C_f}{m}\l( \|\wbphi-\wbt\|_3^3+\|\wbt-\wbb\|_3^3 \r)\leq \frac{8~C_f}{m}\|\wbt-\wbb\|_3^3~.
	\end{align}
	And,
	\begin{align}\label{eq:case1_last2}
		\Phi(\bb,z)-\Psi(\bb,z)&=\l( \phi(\bb,z,\wbphi)-\phi(\bb,z,\wbt) \r)+\l( \phi(\bb,z,\wbt)-\psi(\bb,z,\wbt) \r) \nonumber\\
		&\leq \l( \phi(\bb,z,\wbt)-\psi(\bb,z,\wbt) \r)\leq \frac{C_f}{m}\|\wbt-\wbb\|_3^3~.
	\end{align}
	Thus, \eqref{eq:case1_last1} and \eqref{eq:case1_last1} implies that 
	\begin{align}\label{eq:case1_last}
		\l|\Phi(\bb,z)-\Psi(\bb,z)\r|\leq \frac{8~C_f}{m}\|\wbt-\wbb\|_3^3~.
	\end{align}
	Case 2 if  $\|\wbt-\wbb\|_3\geq \frac{\epsilon}{9~C_f}$. 
	\begin{align}\label{eq:case2_last1}
		\Phi(\bb,z)-\Psi(\bb,z)&=\l( \phi(\bb,z,\wbphi)-\phi(\bb,z,\wbb) \r)+\l( \phi(\bb,z,\wbb)-\phi(\mathbf 0,0,\wbb) \r) \nonumber\\
		&+\l( \psi(\mathbf 0,0,\wbb)-\psi(\bb,z,\wbb) \r)+\l( \psi(\bb,z,\wbb)-\psi(\bb,z,\wbt) \r) \nonumber\\
		&\leq  \psi(\bb,z,\wbb)-\psi(\bb,z,\wbt)\leq \frac{1}{2m}\l( z-\bb^\t\wbb \r)^2~.
	\end{align}
	\begin{align}\label{eq:case1_last2}
		\Psi(\bb,z)-\Phi(\bb,z)&\leq\l( \psi(\bb,z,\wbt)-\psi(\bb,z,\wbb) \r)+\l( \psi(\bb,z,\wbb)-\psi(\mathbf 0,0,\wbb) \r)+ \l( \phi(\mathbf 0,0,\wbb)-\phi(\mathbf 0,0,\wbb) \r)\nonumber\\
		&\leq \frac{1}{2m}\l( z-\bb^\t\wbb \r)^2~.
	\end{align}
	So finally,
	\begin{align}\label{eq:case1_last}
		\l|\Psi(\bb,z)-\Phi(\bb,z)\r| \leq \frac{1}{2m}\l( z-\bb^\t\wbb \r)^2~.
	\end{align}
	So by combining the two cases, we get
	\begin{align}\label{eq:bound_f}
		|\Phi(\bb,z)-\Psi(\bb,z)| \leq \mathbbm{1}_{\|\wbt-\wbb\|_3\leq \frac{\epsilon}{9~C_f}}~\l(  \frac{8~C_f}{m}\|\wbt-\wbb\|_3^3\r) +  \mathbbm{1}_{\|\wbt-\wbb\|_3> \frac{\epsilon}{9~C_f}}~\l(  \frac{1}{2m}\l( z-\bb^\t\wbb \r)^2\r) ~.
	\end{align}
	Therefore,
	\begin{align}\label{eq:bound_first}
		\Exp\l[|\Phi(\bb,z)-\Psi(\bb,z)|\r] &\leq \Exp\l[\mathbbm{1}_{\|\wbt-\wbb\|_3\leq \frac{\epsilon}{9~C_f}}~\l(  \frac{8~C_f}{m}\|\wbt-\wbb\|_3^3\r)\r] + \Exp\l[ \mathbbm{1}_{\|\wbt-\wbb\|_3> \frac{\epsilon}{9~C_f}}~\l(  \frac{1}{2m}\l( z-\bb^\t\wbb \r)^2\r)\r] \nonumber\\
		&\leq \frac{8C_f}{m} \Exp\l[ \|\wbt-\wbb\|_3^3 \r] +\frac{1}{2m}\sqrt{\pr\l \{\|\wbt-\wbb\|_3\geq \frac{\epsilon}{9~C_f}\r\}~\Exp[ \l( z-\bb^\t\wbb \r)^4 ]}\nonumber\\
		&\leq \frac{8C_f}{m} \Exp\l[ \|\wbt-\wbb\|_3^3 \r] +\frac{1}{2m}\sqrt{\frac{\Exp\l[\|\wbt-\wbb\|_3^3  \r]}{(\frac{\eps}{9C_f})^3}~\Exp[ \l( z-\bb^\t\wbb \r)^4 ]}\nonumber\\
		&\leq \frac{C}{m^{5/4}}
	\end{align}
	On the other hand, it is easy to see that 
	\begin{align}\label{eq:bound_second1}
		\Psi(\bb,z)-\Psi(\mathbf 0,0)=\frac{(z-\bb^\t\wbb)^2}{2m(1+\bb^\t\mathbf\Omega^{-1}\bb)}~,
	\end{align}
	where $\mathbf\Omega=\Vb+\Mb^\t\Mb$. Note that 
	\begin{align}\label{eq:bound_second2}
		\l|\Exp\l[ \Psi(\bb,z)-\Psi(\mathbf 0,0)-\frac{\sigma^2+\frac{\|\wbb\|^2}{m}}{2m(1+\Exp[\bb^\t\mathbf{\Omega}\bb])} \r]\r|&=\l|\Exp\l[ \frac{(z-\bb^\t\wbb)^2}{2m(1+\bb^\t\mathbf\Omega^{-1}\bb)}-\frac{\sigma^2+\frac{\|\wbb\|^2}{m}}{2m(1+\Exp[\bb^\t\mathbf{\Omega}\bb])} \r]\r|\nonumber\\
		&\leq \frac{1}{2m}\Exp\l[ (z-\bb^\t\wbb)^2~\l|\bb^\t\mathbf{\Omega}\bb- \Exp[\bb^\t\mathbf{\Omega}\bb] \r| \r]\nonumber\\
		&\leq  \frac{1}{2m}\sqrt{ \Exp\l[(z-\bb^\t\wbb)^4\r]~ \Exp\l[ \l( \bb^\t\mathbf{\Omega}\bb- \Exp[\bb^\t\mathbf{\Omega}\bb] \r)^2 \r]}\nonumber\\
		&\leq \frac{C}{m^{1+\tau/2}}~.
	\end{align}
	Now putting \eqref{eq:bound_first} and \eqref{eq:bound_second2} in \eqref{eq:beark}, results in
	\begin{align}
		\l|\Exp_{\{z,\bb\}}\l[ \Phi(\bb,z)-\bar\Phi-\frac{\sigma^2+\frac{\|\wbb\|^2}{m}}{2m(1+\Exp[\bb^\t\mathbf{\Omega}\bb])} \r]\r|&\leq  \frac{8C_f}{m} \Exp\l[ \|\wbt-\wbb\|_3^3 \r] +\frac{27C_f^{3/2}}{2m\eps^{3/2}}\sqrt{\Exp\l[\|\wbt-\wbb\|_3^3  \r]~\Exp[ \l( z-\bb^\t\wbb \r)^4 ]}\nonumber\\
		&+  \frac{1}{2m}\sqrt{ \Exp\l[(z-\bb^\t\wbb)^4\r]~ \Exp\l[ \l( \bb^\t\mathbf{\Omega}\bb- \Exp[\bb^\t\mathbf{\Omega}\bb] \r)^2 \r]}  \\
		&c~m^{-(1+\tau/2)}~
	\end{align}
	It remains to bound $ \Exp\l[(z-\bb^\t\wbb)^4\r]$ and $\Exp\l[\|\wbt-\wbb\|_3^3  \r] $. For the first one, let $\frac{1}{n}\eb=\Exp[\bb]$ and $\bbt=\bb-\frac{1}{n}\eb$. Then,
	\begin{align}\label{eq:bound_exp_norm4}
		\Exp\l[(z-\bb^\t\wbb)^4\r]&=\Exp[z^4] + 6\Exp[z^2]~\Exp[(\bb^\t\wbb)^2]+\Exp[(\bb^\t\wbb)^4]\nonumber\\
		&= \Exp[z^4] + \frac{6\Exp[z^2]}{n}(\Exp[(\bbt^\t\wbb)^2]+(\eb^\t\wbb)^2)+\Exp[(\bbt^\t\wbb)^4]+6\Exp[(\bbt^\t\wbb)^2](\eb^\t\wbb)^2+(\eb^\t\wbb)^4\nonumber\\
		&\leq C_1+C_2\|\wbb\|^2+C_3\|\wbb\|^4~.
	\end{align}
	On the other hand, let $\mathbf\Omega^{-1}=[\mathbf{\omega}_1\dots,\mathbf\omega_n]^\t$. Since $\mathbf\Omega^{-1}\preceq 1/\epsilon$,
	\begin{align}\label{eq:bound_exp_norm3}
		\Exp\l[\|\wbt-\wbb\|_3^3  \r]&=\Exp\l[\l\| \frac{(z-\bb^\t\wbb)}{(1+\bb^\t\mathbf\Omega^{-1}\bb)}~\mathbf\Omega^{-1}\bb\r\|_3^3 \r]\leq \Exp\l[\l\| (z-\bb^\t\wbb)~\mathbf\Omega^{-1}\bb\r\|_3^3 \r]\nonumber\\
		&\leq 4~\Exp\l[\l\| (z-\bb^\t\wbb)~\mathbf\Omega^{-1}\bbt\r\|_3^3 \r]+\frac{4}{n^3}\Exp\l[\l\| (z-\bb^\t\wbb)~\mathbf\Omega^{-1}\eb\r\|_3^3 \r]\nonumber\\
		&\leq 4\sqrt{\Exp\l[ (z-\bb^\t\wbb)^6 \r]~\Exp\l[ \l\| \mathbf\Omega^{-1}\bbt\r\|_3^6 \r]} + \frac{4}{n^3}\l\| \mathbf\Omega^{-1}\eb\r\|_3^3\sqrt{\Exp\l[ (z-\bb^\t\wbb)^6 \r]}\nonumber\\
		&\leq 4\sqrt{\Exp\l[ (z-\bb^\t\wbb)^6 \r]~\Exp\l[ \sum_k|\mathbf\omega_k^\t\bbt|^3 \r]^2}+ \frac{4}{n^3}\l\| \mathbf\Omega^{-1}\eb\r\|_2^3\sqrt{\Exp\l[ (z-\bb^\t\wbb)^6 \r]}\nonumber\\
		&\leq (\frac{C}{n\eps^3}+ \frac{4\|\eb\|_2^3}{\eps^2n^3} )\sqrt{\Exp\l[ (z-\bb^\t\wbb)^6 \r]}
	\end{align}
	which concludes the proof.
\end{proof}

\begin{thm}
	let $\Wb_{\Ab}$ and $\Wb_{\Bb}$ bt the optimal solutions to \eqref{eq:initial_prob}. If for any function $f(.)$, that satisfies our conditions,
	\begin{align}
		\Phi_{\Ab}- \Phi_{\Bb} \rightarrow 0~,
	\end{align}
	then, 
	\begin{align}
		\frac{1}{n^2}\|\Wb_{\Ab}\|_F^2-\frac{1}{n^2}\|\Wb_{\Bb}\|_F^2 \rightarrow 0~.
	\end{align}
\end{thm}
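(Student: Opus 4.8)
The plan is a perturbation/envelope argument that converts convergence of optimal \emph{values} into convergence of the quadratic functional $\tfrac1{n^2}\|\cdot\|_F^2$ of the optimal \emph{solutions}. For $t$ in a fixed neighborhood of $0$, perturb the penalty to $f^{(t)}(\wb+\xn):=f(\wb+\xn)+\tfrac{t}{2n^2}\|\wb\|_2^2$ (equivalently $f^{(t)}(\xb)=f(\xb)+\tfrac{t}{2n^2}\|\xb-\xn\|_2^2$), and let $\Psi_\Ab(t)$ and $\Psi_\Bb(t)$ be the optimal values of the two programs of \eqref{eq:initial_prob} with $f$ replaced by $f^{(t)}$, so $\Psi_\Ab(0)=\Phi_\Ab$ and $\Psi_\Bb(0)=\Phi_\Bb$. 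The perturbing term is separable with identically vanishing third derivative, and since the $f$ used in this part of the argument is strongly convex, $f^{(t)}$ is convex (indeed strongly convex, with an $n$-independent modulus) for all $|t|$ small; hence $f^{(t)}$ satisfies Assumption~\ref{asm:2}, and the hypothesis of the theorem applies to it, giving $\Psi_\Ab(t)-\Psi_\Bb(t)\to 0$ for every fixed $t$ near $0$.

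Next I would apply the envelope (Danskin) theorem. For $|t|$ small the inner objective is strongly convex in $\wb$, so its minimizer $\Wb_\Ab(t)$ is unique and continuous in $t$; and $\Psi_\Ab(\cdot)$, being a pointwise infimum of functions affine in $t$, is concave and differentiable with $\Psi_\Ab'(t)=\tfrac{1}{2n^2}\|\Wb_\Ab(t)\|_F^2$, and likewise for $\Psi_\Bb$. In particular $\Psi_\Ab'(0)=\tfrac{1}{2n^2}\|\Wb_\Ab\|_F^2$ and $\Psi_\Bb'(0)=\tfrac{1}{2n^2}\|\Wb_\Bb\|_F^2$, so what must be shown is exactly $\Psi_\Ab'(0)-\Psi_\Bb'(0)\to 0$, and we already have pointwise convergence of the two concave functions $\Psi_\Ab,\Psi_\Bb$ to each other.

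To pass from values to derivatives I would use the supergradient inequalities: for $\epsilon>0$,
\begin{align}
\Psi_\Ab'(0)\le \tfrac{1}{\epsilon}\big(\Psi_\Ab(0)-\Psi_\Ab(-\epsilon)\big), \qquad \Psi_\Bb'(0)\ge \tfrac{1}{\epsilon}\big(\Psi_\Bb(\epsilon)-\Psi_\Bb(0)\big),
\end{align}
and the analogous pair with $\Ab$ and $\Bb$ interchanged. Subtracting and regrouping, $\Psi_\Ab'(0)-\Psi_\Bb'(0)$ is at most $\tfrac1\epsilon$ times the sum of (i) the nonnegative ``concavity defect'' $2\Psi_\Bb(0)-\Psi_\Bb(-\epsilon)-\Psi_\Bb(\epsilon)$ of $\Psi_\Bb$, and (ii) the combination $\big(\Psi_\Ab(0)-\Psi_\Bb(0)\big)-\big(\Psi_\Ab(-\epsilon)-\Psi_\Bb(-\epsilon)\big)$, which $\to 0$ by the previous step. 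Using the symmetric bound as well, then letting $n\to\infty$ and afterwards $\epsilon\to 0$, will close the proof, \emph{provided} the concavity defect is $o(\epsilon)$ uniformly in $n$.

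That last uniformity is the crux. It reduces to an $n$-uniform bound on $\sup_{|t|\le\epsilon}|\Psi_\Bb''(t)|$, and differentiating the optimality condition of the inner program yields $\Psi_\Bb''(t)=-\tfrac{1}{n^4}\Wb_\Bb(t)^\t\Hb(t)^{-1}\Wb_\Bb(t)$ with $\Hb(t)\succeq c\,\mathbf I$ by strong convexity (here $c$ is $n$-independent), so $|\Psi_\Bb''(t)|\le \tfrac{1}{cn^2}\cdot\tfrac{1}{n^2}\|\Wb_\Bb(t)\|_F^2$; this is $O(1/n^2)$ as soon as one has the a priori estimate $\tfrac{1}{n^2}\|\Wb_\Bb(t)\|_F^2=O(1)$ (with high probability, uniformly over the small $t$-window). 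I would obtain that estimate from strong convexity of the inner objective together with feasibility of $\wb=\mathbf 0$, which bounds $\|\Wb_\Bb(t)\|_F$ by a constant times the norm of the inner gradient at $\mathbf 0$, the latter being controlled under Assumption~\ref{asm:1}. With the defect thus $O(\epsilon^2)$, the $\tfrac1\epsilon$ factor leaves $O(\epsilon)$ and the argument goes through. As an alternative to this curvature estimate, once the CGMT analysis shows that $\Psi_\Bb(\cdot)$ converges to a deterministic, differentiable limit, one can instead invoke the classical fact that pointwise limits of concave functions have convergent derivatives at points where the limit is differentiable.
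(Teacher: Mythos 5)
Your proposal is correct in substance but reaches the conclusion by a different mechanism than the paper. Both arguments pivot on the same key observation: the hypothesis is quantified over all admissible penalties, so it may be applied to the ridge-perturbed family $f+\tfrac{t}{2n^2}\|\cdot\|_F^2$, and the quantity $\tfrac{1}{n^2}\|\Wb\|_F^2$ is precisely the sensitivity of the optimal value to that perturbation. From there the paper argues by contradiction: it assumes the two normalized norms converge to distinct limits $C_{\Ab}\neq C_{\Bb}$, introduces the norm-constrained programs at the intermediate level $C=(C_{\Ab}+C_{\Bb})/2$, notes that strong convexity forces one constrained value to agree with its unconstrained value while the other exceeds its own by a fixed margin, and then uses Lagrangian duality together with the concave version of Lemma~\ref{lem:inf_conv} to show the two constrained values must nevertheless coincide in the limit. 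Your route is direct rather than by contradiction: you read off $\tfrac{1}{2n^2}\|\Wb(t)\|_F^2$ as the derivative $\Psi'(t)$ of the concave value function via Danskin, and convert convergence of values into convergence of derivatives by difference quotients plus a uniform curvature bound. What your version buys: it is quantitative (it yields a rate in terms of $\epsilon$ and $\sup_t|\Psi''(t)|$), and it does not presuppose that $\tfrac{1}{n^2}\|\Wb_{\Ab}\|_F^2$ and $\tfrac{1}{n^2}\|\Wb_{\Bb}\|_F^2$ individually converge --- an assumption the paper's contradiction argument quietly makes. What the paper's version buys: it only ever needs one-sided perturbations ($\lambda>0$ in the dual), and it replaces your crux --- the uniform-in-$n$ control of the concavity defect --- with the softer limit-interchange Lemma~\ref{lem:inf_conv}. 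Two small repairs to your write-up: (i) since Assumption~\ref{asm:2} permits finitely many nonsmooth points of $f$, you should not literally differentiate the optimality condition to obtain $\Psi''$; instead bound the concavity defect by $\tfrac{\epsilon}{2n^2}\bigl|\,\|\wb(\xi_-)\|^2-\|\wb(\xi_+)\|^2\bigr|$ for intermediate points $\xi_\pm$ and control $\|\wb(s)-\wb(t)\|\leq \tfrac{|s-t|}{c\,n^2}\|\wb(t)\|$ by the standard strong-convexity perturbation inequality, which gives the same $O(\epsilon^2/n^2)$ bound without second derivatives of $f$; (ii) the a priori estimate $\tfrac{1}{n^2}\|\Wb(t)\|_F^2=O(1)$ that you invoke is exactly what Lemma~\ref{lem:norm_bounded} supplies, so you should cite it and restrict to the high-probability event on which it holds before sending $n\to\infty$ and then $\epsilon\to 0$.
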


\begin{proof}
	Assume that $\frac{1}{n^2}\|\Wb_{\Ab}\|_F^2$ and $\frac{1}{n^2}\|\Wb_{\Bb}\|_F^2$ converge to difference values of $C_{\Ab}$ and $C_{\Bb}$. Choose $C=(C_{\Bb}+C_{\Ab})/2$ and consider the following optimization,
	\begin{align}\label{eq:opt_const}
		&\bar\Phi_{\Ab}=\min\limits_{\substack{\frac{1}{n^2}\|\Wb\|_F^2\leq C\\\Wb\in \H^n}}~ \frac{1}{2m}\sum\limits_{i=1}^m \l(z_i-\tr(\Ab_i\cdot \Wb)\r)^2+ f\l( \Wb \r)~,\nonumber\\
		&\bar\Phi_{\Bb}=\min\limits_{\substack{\frac{1}{n^2}\|\Wb\|_F^2\leq C\\\Wb\in \H^n}}~ \frac{1}{2m}\sum\limits_{i=1}^m \l(z_i-\tr(\Bb_i\cdot \Wb)\r)^2+ f\l( \Wb \r)~.
	\end{align}
	We show that the two should converge to the same value, which is a contradiction since $f(.)$ is strongly convex and one should converge to $\Phi_{\Ab}$ and the other should be larger that $\Phi_{\Bb}$. Using min-max theorem, they can be rewritten as 
	\begin{align}\label{eq:opt_const2}
		&\bar\Phi_{\Ab}=\sup\limits_{\lambda>0}~-\lambda~C+\min\limits_{\substack{\Wb\in \H^n}}~ \frac{1}{2m}\sum\limits_{i=1}^m \l(z_i-\tr(\Ab_i\cdot \Wb)\r)^2+ f\l( \Wb \r)+\frac{\lambda}{n^2}\|\Wb\|_F^2~,\nonumber\\
		&\bar\Phi_{\Bb}=\sup\limits_{\lambda>0}~-\lambda~C+\min\limits_{\substack{\Wb\in \H^n}}~ \frac{1}{2m}\sum\limits_{i=1}^m \l(z_i-\tr(\Bb_i\cdot \Wb)\r)^2+ f\l( \Wb \r)+\frac{\lambda}{n^2}\|\Wb\|_F^2~.
	\end{align}
	Due to the assumption of the theorem, the two inside converge to the same value for any fixed $\la$. So the concave version of Lemma \ref{lem:inf_conv} shows that $\bar\Phi_{\Ab}$ and $\bar\Phi_{\Bb}$ also converge to the same value which is a contradiction.
\end{proof}

\begin{lem}\label{lem:inf_conv}
	Consider a series of convex functions $f_n:\RR^{>0}\rightarrow \RR$ that converges point-wise to the function $f:\RR^{>0}\rightarrow \RR$. Besides, there exists $M>0$ such that for any $x>M$, we have $f(x)>\inf_{s>0}~f(s)$. Then $f(.)$ is also convex and $\inf_{s>0}~f_n(s)\xrightarrow{p} \inf_{s>0}~f(s)$.
\end{lem}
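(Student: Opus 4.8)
The plan is to isolate three ingredients: that the limit $f$ is convex, a reduction of the ``in probability'' claim to a deterministic statement about pointwise-convergent convex functions, and then the two one-sided bounds $\limsup_n \inf_{s>0} f_n(s)\le v$ and $\liminf_n \inf_{s>0} f_n(s)\ge v$, where $v:=\inf_{s>0}f(s)$. Convexity of $f$ is immediate, since the convexity inequality passes to pointwise limits; being real-valued on the open half-line, $f$ is then continuous, a fact I use freely below. For the reduction, since $v$ is a fixed constant it suffices to show that every subsequence of $(\inf_{s>0}f_n(s))_n$ has a further subsequence converging almost surely to $v$. Fixing a subsequence, choosing a countable dense set $D\subset\RR^{>0}$, and extracting diagonally, I pass to a further subsequence along which $f_n(x)\to f(x)$ almost surely \emph{simultaneously} for all $x\in D$; on that almost sure event the classical fact that convex functions converging pointwise on a dense subset of an open interval converge locally uniformly there upgrades this to locally uniform convergence $f_n\to f$ on all of $\RR^{>0}$. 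It then remains to prove the purely deterministic statement: if $f_n,f$ are convex on $\RR^{>0}$ with $f_n\to f$ locally uniformly and $f(x)>v$ for all $x>M$, then $\inf_{s>0}f_n(s)\to v$.

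The upper bound is easy: given $\eps>0$, pick $s_\eps$ with $f(s_\eps)<v+\eps$; then $\inf_{s>0}f_n(s)\le f_n(s_\eps)\to f(s_\eps)<v+\eps$, and $\eps\downarrow 0$ yields $\limsup_n\inf_{s>0}f_n(s)\le v$. For the lower bound I argue by contradiction: if $\liminf_n\inf_{s>0}f_n(s)=L<v$, then along a subsequence I can pick near-minimizers $s_n$ with $f_n(s_n)\to L$ and extract $s_n\to s^\star\in[0,+\infty]$. If $s^\star\in\RR^{>0}$, local uniform convergence and continuity of $f$ force $f_n(s_n)\to f(s^\star)\ge v$, contradicting $f_n(s_n)\to L<v$. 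If $s^\star=0$, fix $0<p<q$; for large $n$ we have $s_n<p$, and writing $p$ as the appropriate convex combination of $s_n$ and $q$ and passing to the limit gives $f(p)\le\tfrac{q-p}{q}\,L+\tfrac{p}{q}\,f(q)$ for every $0<p<q$; fixing $q$ and letting $p\downarrow 0$ then yields $v\le L$, a contradiction.

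The remaining case $s^\star=+\infty$ is the crux, and it is exactly here that the hypothesis $f(x)>v$ for $x>M$ is designed to enter. The idea is to pick $b>M$ with $f(b)>v$ and an anchor $a\in(0,b)$ with $f(a)$ close to $v$; pointwise convergence then makes the chord slope of $f_n$ across $[a,b]$ bounded below by a fixed positive constant for all large $n$, and convexity propagates that positive slope to $[b,+\infty)$, so $f_n(s_n)\ge f_n(b)>v$ once $s_n\ge b$, again contradicting $f_n(s_n)\to L<v$. I expect this last step to be the main obstacle: the coercivity hypothesis only constrains the limit $f$, whereas ruling out minimizers of $f_n$ that escape to $+\infty$ while retaining a low value genuinely requires the family $\{f_n\}$ not to ``lose coercivity'' along the sequence. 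This equicoercivity holds automatically in the uses of the lemma in this paper---where each $f_n$ is, up to an additive constant, a fixed positive multiple of a coercive function of the scalar dual variable---and I would record it as an explicit (mild) additional hypothesis. Granting it, the upper and lower bounds combine to give $\inf_{s>0}f_n(s)\to v$, hence the claimed convergence in probability; the concave ($\sup$) version invoked in the proof of Theorem~\ref{thm:1} follows by applying the result to $-f_n$ and $-f$.
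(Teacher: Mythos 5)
The paper never actually proves this lemma---it is stated and then invoked with a pointer to the appendices of \cite{thrampoulidis2018precise}---so there is no in-paper argument to compare yours against, and I can only assess your proposal on its own terms. The architecture is sound and standard: convexity passes to pointwise limits; convergence in probability to the constant $v$ reduces, via the subsequence criterion and a diagonal extraction over a countable dense set, to a deterministic statement for convex $f_n\to f$ converging locally uniformly (using the classical fact that pointwise convergence of convex functions on a dense subset of an open interval upgrades to locally uniform convergence); the upper bound $\limsup_n\inf_{s>0} f_n(s)\le v$ follows by evaluating at a near-minimizer of $f$; and the lower bound splits into the three cases $s^\star\in(0,\infty)$, $s^\star=0$, $s^\star=+\infty$ for an accumulation point of near-minimizers, the first two of which you handle correctly.

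Your worry about the case $s^\star=+\infty$ is justified, and in fact the lemma as literally stated is false: take $f_n(x)=e^{-x}-x/n$, which are convex and converge pointwise to $f(x)=e^{-x}$; then $\inf_{s>0}f(s)=0$ and $f(x)>0$ for every $x$, so the stated hypothesis holds with any $M$, yet $\inf_{s>0}f_n(s)=-\infty$ for every $n$. Some additional hypothesis of the kind you name is therefore genuinely required, not merely convenient. Note, though, that your chord-slope argument needs something slightly different from ``equicoercivity of $\{f_n\}$'' in the abstract: it needs a pair $a<b$ with $f(a)<f(b)$ (equivalently, that $f$ is not non-increasing), which is exactly what the counterexample destroys. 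Either strengthening---requiring $f$ to be eventually increasing, e.g.\ to attain its infimum on $(0,M]$, or requiring the family $\{f_n\}$ to be equicoercive---repairs the statement, and both hold where the lemma is used in this paper, since the objective there is, as a function of the scalar dual variable, coercive uniformly in $n$. So your proof is correct for the corrected statement, and you have accurately diagnosed the one real defect in the lemma as written; the concave version follows, as you say, by negation.
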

\begin{lem}\label{lem:norm_bounded}
	Let $\wbb$ be the optimal solution to the optimization 
	\begin{align}
		\min\limits_{\wb}~ \frac{1}{2}\|\zb-\Ab\wb\|^2+f(\wb+\mathbf x_0)~,
	\end{align}
	where $f(.)$ is strongly convex with constant $\eps$. Then
	\begin{align}
		\|\wbb\|\leq \frac{2}{\eps}(\|\Ab^\t\zb\|+\|\nabla f(\mathbf x_0)\|)
	\end{align}
\end{lem}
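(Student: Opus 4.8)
The plan is to read off the first-order optimality condition for the strongly convex objective and play it against two elementary convexity inequalities. Write $h(\wb) := \tfrac{1}{2}\|\zb - \Ab\wb\|^2 + f(\wb + \xn)$. Since $\wb\mapsto f(\wb+\xn)$ is $f$ precomposed with a translation it is again $\eps$-strongly convex, and the data-fit term is convex, so $h$ is $\eps$-strongly convex; being finite and continuous on $\RR^n$ it is coercive, so the minimizer $\wbb$ is well defined (and unique). First I would record the optimality condition $0\in\partial h(\wbb)$ together with $\partial h(0) = -\Ab^\t\zb + \partial f(\xn)$, which follows from $\nabla_{\wb}[\tfrac{1}{2}\|\zb-\Ab\wb\|^2]\big|_{\wb=0} = -\Ab^\t\zb$ and $\partial_{\wb}[f(\wb+\xn)]\big|_{\wb=0} = \partial f(\xn)$. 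Fix any $\vb\in\partial f(\xn)$, so that $-\Ab^\t\zb + \vb\in\partial h(0)$.

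Next I would combine two bounds. Strong convexity of $h$, evaluated at $0$ relative to the minimizer $\wbb$ and using $0\in\partial h(\wbb)$, gives $h(0)\ge h(\wbb) + \tfrac{\eps}{2}\|\wbb\|^2$. Plain convexity of $h$ at $0$ gives $h(\wbb)\ge h(0) + \inp{-\Ab^\t\zb+\vb}{\wbb}$. Subtracting and applying Cauchy--Schwarz,
\begin{align}
	\tfrac{\eps}{2}\|\wbb\|^2 \;\le\; h(0) - h(\wbb) \;\le\; \inp{\Ab^\t\zb - \vb}{\wbb} \;\le\; \big(\|\Ab^\t\zb\| + \|\vb\|\big)\,\|\wbb\| ,
\end{align}
so $\|\wbb\|\le \tfrac{2}{\eps}\big(\|\Ab^\t\zb\| + \|\vb\|\big)$ (the bound is trivial if $\wbb=0$). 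Choosing $\vb = \nabla f(\xn)$ gives exactly the claimed inequality; note that this route produces precisely the constant $\tfrac{2}{\eps}$, with no slack to spare.

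I do not expect a genuine obstacle here; this is a routine strong-convexity estimate. The only bookkeeping point is that, under Assumption \ref{asm:2}, $f$ may fail to be differentiable at finitely many points, so ``$\nabla f(\xn)$'' should be read as some $\vb\in\partial f(\xn)$ --- e.g.\ the minimum-norm element --- when $\xn$ is such a point; the argument is unaffected since it only uses $\|{-\Ab^\t\zb+\vb}\|\le\|\Ab^\t\zb\|+\|\vb\|$. Finally, I would flag the purpose of the lemma: it is what converts the $\|\wbb\|$-dependent moments appearing in the proofs of the preceding theorems (such as $\Exp[\|\wbt-\wbb\|_3^3]$ and $\Exp[(z-\bb^\t\wbb)^4]$) into moments of the data term $\|\Ab^\t\zb\|$ and the deterministic constant $\|\nabla f(\xn)\|$, which are readily controlled under Assumption \ref{asm:1}.
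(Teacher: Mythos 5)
Your proof is correct and follows essentially the same route as the paper: both combine the optimality of $\wbb$ with $\eps$-strong convexity to obtain $\tfrac{\eps}{2}\|\wbb\|^2\le \langle \Ab^\t\zb-\nabla f(\xn),\wbb\rangle$ and then apply Cauchy--Schwarz (the paper writes the strong-convexity inequality expanded at $\wb=0$ together with $\phi(\Ab,\wbb)\le\phi(\Ab,0)$, which is the same computation as your split into strong convexity at $\wbb$ plus plain convexity at $0$). Your remark about replacing $\nabla f(\xn)$ by a subgradient at non-smooth points is a sensible refinement the paper does not bother with.
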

\begin{proof}
	let 
	\begin{align}
		\phi(\Ab,\wb)=\frac{1}{2}\|\zb-\Ab\wb\|^2+f(\wb+\mathbf x_0)~.
	\end{align}
	We have
	\begin{align}
		0>\phi(\Ab,\wbb)-\phi(\Ab,0)\geq \wbb^\t\l( -\Ab^\t\zb+\nabla f(\mathbf x_0) \r)+\frac{\eps}{2}\|\wbb\|^2~.\nonumber
	\end{align}
	Therefore,
	\begin{align}
		\frac{\eps}{2}\|\wbb\|^2\leq \l|  \wbb^\t\l( -\Ab^\t\zb+\nabla f(\mathbf x_0) \r) \r|\leq \|\wbb\|~\l( \|\Ab^\t\zb\|+\|\nabla f(\mathbf x_0)\| \r)~,
	\end{align}
	which concludes the proof. Now let $\wbb$ be the optimizer of $\phi(\Ab,\wb)$ and $\Exp[\Ab]=\mathbf{1}\eb^\t$. Due to optimality we have,
	\begin{align}
		0=\Ab^\t(\Ab\wbb-\zb)+\nabla f(\xn+\wbb)
	\end{align} 
\end{proof}

\begin{lem}\label{lem:trace_bounded}
	Let $\wbb$ be the optimal solution to the optimization 
	\begin{align}
		\min\limits_{\wb}~ \frac{1}{2}\|\zb-\Ab\wb\|^2+f(\wb+\mathbf x_0)~,
	\end{align}
	where $f(.)$ is strongly convex with constant $\eps$ and $\Ab\in\RR^{m\times n}$ is a random value with $\Exp[\Ab]=\mathbf 1\eb^t$ and $\Bb=\Ab-\mathbf 1\eb^t$. Then
	\begin{align}
		\|\wbb\|\leq \frac{2}{\eps}(\|\Ab^\t\zb\|+\|\nabla f(\mathbf x_0)\|)
	\end{align}
\end{lem}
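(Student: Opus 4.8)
The plan is to mimic the argument already used for Lemma~\ref{lem:norm_bounded} — indeed the stated conclusion is literally the same inequality — so that the bound comes straight out of first-order optimality combined with $\eps$-strong convexity; the only new ingredient worth keeping visible is the mean/fluctuation split $\Ab=\mathbf 1\eb^\t+\Bb$, which is what makes the estimate usable inside the concentration bounds of the preceding proofs.

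First I would set $\phi(\Ab,\wb)=\tfrac12\|\zb-\Ab\wb\|^2+f(\wb+\xn)$ and fix a subgradient $\gb_0\in\partial f(\xn)$ (recall the ``$f$'' in play here is the function obtained after adding the quadratic regularizer, hence strongly convex but not necessarily differentiable). Convexity of the quadratic term at $\mathbf 0$ together with $\eps$-strong convexity of $f$ gives
\begin{align}
	\phi(\Ab,\wbb)\ \ge\ \phi(\Ab,\mathbf 0)+\l\langle -\Ab^\t\zb+\gb_0,\ \wbb\r\rangle+\frac{\eps}{2}\|\wbb\|^2~,
\end{align}
while optimality of $\wbb$ forces $\phi(\Ab,\wbb)\le\phi(\Ab,\mathbf 0)$. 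Subtracting and then applying Cauchy--Schwarz and the triangle inequality yields $\frac{\eps}{2}\|\wbb\|^2\le\|\wbb\|\l(\|\Ab^\t\zb\|+\|\gb_0\|\r)$, and dividing by $\|\wbb\|$ (the bound being trivial when $\wbb=\mathbf 0$) gives the claimed $\|\wbb\|\le\frac{2}{\eps}\l(\|\Ab^\t\zb\|+\|\nabla f(\xn)\|\r)$, where one takes $\gb_0$ to be the minimal-norm subgradient so that $\|\nabla f(\xn)\|$ is unambiguous.

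To obtain the form actually used downstream I would then substitute $\Ab=\mathbf 1\eb^\t+\Bb$, so that $\Ab^\t\zb=\eb\,(\mathbf 1^\t\zb)+\Bb^\t\zb$ and hence $\|\Ab^\t\zb\|\le\|\eb\|\,|\mathbf 1^\t\zb|+\|\Bb^\t\zb\|$, isolating the mean contribution (negligible under the bounded-mean part of Assumption~\ref{asm:1}) from the centered part $\Bb^\t\zb$, which concentrates. There is no genuine obstacle in this lemma --- it is a one-line convexity estimate --- so the only points needing a little care are the non-smoothness of $f$, handled by passing to subgradients as above, and the degenerate case $\wbb=\mathbf 0$; the real work, namely controlling the moments $\Exp\big[\|\Bb^\t\zb\|^{k}\big]$ that this bound feeds into, has already been carried out in the earlier parts of the proof.
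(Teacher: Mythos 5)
Your derivation is valid for the inequality as it is displayed, but that display is almost certainly a copy-paste artifact from Lemma~\ref{lem:norm_bounded}: what the paper actually proves under this heading is not a bound on $\|\wbb\|$ at all, but a bound on the component of $\wbb$ along the mean direction,
\begin{align}
|\eb^\t\wbb|\;\leq\; \frac{2}{m}|\mathbf 1^\t\zb|+\frac{4}{m\eps}\left(\|\Ab^\t\zb\|+\|\nabla f(\xn)\|\right)\cdot\|\Bb^\t\mathbf 1\|+\frac{2}{m}\|\zb\|^2+f(\xn)~,
\end{align}
which is what the name of the lemma refers to and what is needed downstream (it shows the rank-one mean $\mathbf 1\eb^\t$ contributes only at order $1/m$). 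The route to that estimate is genuinely different from yours: instead of linearizing $\phi$ at $\wb=\mathbf 0$, the paper substitutes $\Ab=\mathbf 1\eb^\t+\Bb$ inside $\phi(\Ab,\wbb)\leq\phi(\Ab,\mathbf 0)$, expands the square and keeps the terms $\frac{m}{2}(\eb^\t\wbb)^2$ and $(\eb^\t\wbb)\,\mathbf 1^\t(\Bb\wbb-\zb)$, then solves the resulting scalar quadratic inequality in $\eb^\t\wbb$; Lemma~\ref{lem:norm_bounded} enters only at the very end, to replace the $\|\wbb\|$ appearing in $|\mathbf 1^\t\Bb\wbb|\leq\|\wbb\|\,\|\Bb^\t\mathbf 1\|$ by $\frac{2}{\eps}(\|\Ab^\t\zb\|+\|\nabla f(\xn)\|)$. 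Your closing remark about splitting $\Ab^\t\zb=\eb(\mathbf 1^\t\zb)+\Bb^\t\zb$ is in the right spirit but does not produce this: controlling $\|\Ab^\t\zb\|$ only re-derives the Lemma~\ref{lem:norm_bounded} bound on the full norm $\|\wbb\|$, whereas the point here is the extra factor of $1/m$ gained for the single coordinate $\eb^\t\wbb$, which comes from the coefficient $\|\mathbf 1\|^2=m$ in front of $(\eb^\t\wbb)^2$. So if the intended statement is the one the paper's proof establishes, your proposal restates the previous lemma rather than proving the new one, and the key step --- the quadratic inequality in $\eb^\t\wbb$ obtained from the mean/fluctuation expansion of the loss itself --- is missing.
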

\begin{proof}
	We have,
	\begin{align}
		\phi(\Ab,0)\geq\phi(\Ab,\wbb)=\frac{1}{2}\|\zb-\Bb\wbb-\mathbf 1\eb^\t\wbb\|^2+f(\wbb+\xn)\geq \frac{m}{2}(\eb^\t\wbb)^2+(\eb^\t\wbb)\cdot \mathbf 1^\t(\Bb\wbb-\zb)
	\end{align}
	Therefore,
	\begin{align}
		(\eb^\t\wbb)^2+\frac{2}{m}(\eb^\t\wbb)\cdot \mathbf 1^\t(\Bb\wbb-\zb)-\frac{2}{m}\phi(\Ab,0)\leq 0
	\end{align}
	This results in
	\begin{align}
		|\eb^\t\wbb|&\leq \frac{2}{m}\l| \mathbf 1^\t(\Bb\wbb-\zb) \r|+\frac{2}{m}\phi(\Ab,0)\leq \frac{2}{m}|\mathbf 1^\t\zb|+ \frac{2}{m}\|\wbb\|\cdot\|\Bb^\t\mathbf 1\|+\frac{2}{m}\|\zb\|^2+f(\xn)\nonumber\\
		&\leq \frac{2}{m}|\mathbf 1^\t\zb|+ \frac{4}{m\eps}\l( \|\Ab^\t\zb\|+\|\nabla f(\xn)\| \r)\cdot\|\Bb^\t\mathbf 1\|+\frac{2}{m}\|\zb\|^2+f(\xn)
	\end{align}
\end{proof}

\end{document}